\newtheorem{theorem}{Theorem}
\newtheorem*{theorem*}{Theorem}
\newtheorem*{proof*}{}
\newtheorem{corollary}[theorem]{Corollary}
\newtheorem{lemma}[theorem]{Lemma}
\numberwithin{equation}{section}
\begin{document}
\title[area estimates for stable $H$-hypersurfaces]{A note on area estimates for stable $H$-hypersurfaces and applications}
\author{Marcos Ranieri}
\email{marcos.ranieri@im.ufal.br}
\address{Universidade Federal de Alagoas, Instituto de Matemática, 57072-970, Maceió - AL, Brazil}
\author{Elaine Sampaio}
\email{elaine.carlos@im.ufal.br}
\address{Universidade Estadual Vale do Acaraú, 62010-305, Sobral - CE, Brazil}
\curraddr{Universidade Federal de Alagoas, Instituto de Matemática, 57072-970, Maceió - AL, Brazil}
\author{Feliciano Vitório}
\email{feliciano@pos.mat.ufal.br}
\address{Universidade Federal de Alagoas, Instituto de Matemática, 57072-970, Maceió - AL, Brazil}

\begin{abstract}
We show some area estimates for stable CMC hypersurfaces immersed in Riemannian manifolds with scalar and sectional curvature bounded from below. In particular, we focus on immersions in three-dimensional Riemannian manifolds. As an application, we derive some upper estimates for the bottom spectrum of these hypersurfaces. This paper generalizes and builds on the techniques developed by Munteanu, Sung and Wang \cite{Mun} for stable minimal surfaces.
\end{abstract}

\maketitle

\section{Introduction}

A minimal hypersurface $\Sigma$ is said to be stable if it minimizes the area functional $\mathcal{A}(t)$ up to the second order for all compactly supported variations. A natural generalization of this concept is the stability of surfaces with constant mean curvature, which was introduced by Barbosa and Do Carmo \cite{LM} and has been extensively investigated in the last few decades, see for instance \cite{barbosa1988stability, MPR, mori1983stable} and references therein.

Surfaces with constant mean curvature $H$, also known as $H$-surfaces, correspond to critical points of the functional $$\mathcal{J}(t) = \mathcal{A}(t) - 2H\mathcal{V}(t),$$
where $\mathcal{A}(t)$ and $\mathcal{V}(t)$ denote the area and the volume enclosed of the variation between $0$ and $t$.
In this case, an $H$-hypersurface is said to be \textit{strongly stable} if, for compactly supported variations, we have 
$$\displaystyle \frac{d^2}{dt^2}\biggr\rvert_{t=0}\mathcal{J}(t) \geq 0.$$ It is well known that strong stability is equivalent to the positiveness of the first eigenvalue of the Jacobi operator $$L=\Delta +|h|^2+\mathrm{Ric(\nu , \nu )},$$ as well as the existence of a positive
smooth solution $u$ to the equation $Lu = 0$ (see \cite{FC}).  
Here, $h$ is the second fundamental form of $\Sigma$, $\mathrm{Ric}$ is the Ricci curvature of $M$ and $\nu$ is the outward unit normal vector field.  It is important to note that if $\Sigma$ is a strongly stable $H$-surface in $M$, then
 \begin{equation}
    \label{estabilityint}
    \int _{\Sigma} (|h|^2+\mathrm{Ric(\nu ,\nu )})f^2 \leq \int _{\Sigma} |\nabla f|^2, \ \ \ \forall f \in C_{0}^{\infty}(\Sigma).
\end{equation}
 For $H$-surfaces there exists another notion of stability related to the isoperimetric problem: a $CMC$ surface is said to be \textit{weakly stable} if \eqref{estabilityint} holds for every test function $f \in C_{0}^{\infty}(\Sigma)$ with $\int _{\Sigma} f=0$. A strongly stable $H$-surface is weakly stable. However, the converse may not be true. We emphasize that all of our results pertain to strongly stable $H$-surfaces, which we will refer to as stable $H$-surfaces for brevity. 

Recently, Munteanu, Sung and Wang \cite{Mun} were able to show some nice area estimates for complete stable minimal surfaces in a three-dimensional manifold with bounded scalar curvature or sectional curvature. Inspired by \cite{Mun}, we show some area estimates for stable $H$-surfaces. Our first result can be stated as

\begin{theorem}
\label{int1} Let $B_{p}\left( R\right) $ be a geodesic ball in a stable
$H$-surface $\Sigma $, with $|H|<1$, in a three-dimensional manifold $M.$ Assume that $%
B_{p}\left( R\right) $ does not intersect the boundary of $\Sigma.$ 

\begin{enumerate}
\item If the scalar curvature $S$ of $M$ satisfies $S\geq -6,$ then

\begin{equation*}
A\left( R\right) \leq C_{1}\,e^{2\sqrt{1-H^2}\,R}
\end{equation*}
for some absolute constant $C_{1}>0.$

\item If the sectional curvature $K$ of $M$ satisfies $K\geq -1,$ then

\begin{equation*}
A\left( R\right) \leq C_{1}\,e^{\frac{4\sqrt{1-H^2}}{\sqrt{7}}\,R}
\end{equation*}
for some absolute constant $C_{1}>0.$
\end{enumerate}
\end{theorem}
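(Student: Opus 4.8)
The plan is to adapt the test-function argument of Munteanu–Sung–Wang to the CMC setting, where the extra term $2H\mathcal V(t)$ in the functional $\mathcal J$ forces us to track how $|H|<1$ enters the stability inequality. First I would establish a pointwise lower bound for $|h|^2 + \mathrm{Ric}(\nu,\nu)$ along $\Sigma$. Using the Gauss equation on a surface in a $3$-manifold one writes $2K_\Sigma = S - 2\,\mathrm{Ric}(\nu,\nu) + (\mathrm{tr}\,h)^2 - |h|^2 = S - 2\,\mathrm{Ric}(\nu,\nu) + 4H^2 - |h|^2$, so that
\begin{equation*}
|h|^2 + \mathrm{Ric}(\nu,\nu) = \tfrac12\bigl(|h|^2 + S\bigr) + 2H^2 - K_\Sigma.
\end{equation*}
Under $S \geq -6$ this gives $|h|^2 + \mathrm{Ric}(\nu,\nu) \geq \tfrac12|h|^2 - 3 + 2H^2 - K_\Sigma$, and one then combines this with the elementary bound $|h|^2 \geq 2H^2$ (or the sharper $|h|^2 \geq (\mathrm{tr}\,h)^2/2 = 2H^2$ for surfaces) to extract a clean constant; the number $2\sqrt{1-H^2}$ in the exponent should emerge as $2\sqrt{c}$ where $c$ is the resulting lower bound $3 - H^2$ appearing after the Gauss substitution is balanced against the Laplacian-comparison/Gauss–Bonnet contribution. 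For part (2) one uses instead the sectional bound $K \geq -1$ to control $\mathrm{Ric}(\nu,\nu) \geq -2$ directly and trades off the coefficient, which is why the exponential rate degrades to $\tfrac{4\sqrt{1-H^2}}{\sqrt 7}$.

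The second ingredient is the choice of test function in \eqref{estabilityint}. I would take $f$ to be a function of the distance $r = d(p,\cdot)$ of the form $f = e^{\alpha r}\varphi(r)$ with $\varphi$ a cutoff supported in $B_p(R)$ and $\alpha$ the decay rate to be optimized, exactly as in \cite{Mun}. Plugging this into \eqref{estabilityint} and using the co-area formula, the inequality becomes a differential inequality for $A(t) = |B_p(t)|$, after one integrates the curvature term: the key is that $\int_{B_p(R)} K_\Sigma f^2$ is controlled via Gauss–Bonnet by $2\pi\chi - \int_{\partial} k_g$ plus boundary terms, and for the geodesic ball the geodesic-curvature integral of $\partial B_p(t)$ is handled by the first variation of length / Laplacian comparison, i.e. $\int_{\partial B_p(t)} k_g = \frac{d}{dt}L(t)$ together with $L'(t) \leq$ (integral of $K_\Sigma$ over the ball) $+ 2\pi$ from the second variation. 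Carefully assembling these gives $\int_\Sigma K_\Sigma f^2 \leq$ (lower-order in $A$) and the stability inequality collapses to $\bigl(c - \alpha^2\bigr)\int f^2 \leq$ boundary error, forcing $\alpha \to \sqrt c$ and hence $A(R) \leq C_1 e^{2\alpha R}$.

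The main obstacle I anticipate is the bookkeeping around the constant mean curvature term and making the Gauss–Bonnet/geodesic-curvature estimate rigorous on the possibly-irregular geodesic sphere $\partial B_p(t)$: one must work for a.e. $t$ using that $r$ is Lipschitz and smooth outside the cut locus, integrate the differential inequality for $A(t)$, and verify that the error terms coming from $\varphi'$ are absorbed by letting the cutoff steepen near $r = R$. A secondary subtlety is checking that $|H| < 1$ is exactly what guarantees the relevant constant $c = 1 - H^2$ (after all normalizations) stays positive, so that the argument produces a genuine exponential bound rather than a trivial one; the borderline case $|H| \to 1$ is where the estimate must, and does, blow up. Once the differential inequality $A'(t) \leq 2\sqrt{1-H^2}\,A(t) + (\text{lower order})$ is in hand, integrating from a fixed small radius to $R$ yields the stated bounds with an absolute constant $C_1$ depending only on the geometry at that fixed scale, which one absorbs into $C_1$.
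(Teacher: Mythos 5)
Your first two ingredients---the traced Gauss equation, the bound $|h|^2\ge 2H^2$, Gauss--Bonnet together with the first variation of arc length ($L'(r)=\int_{\partial B_p(r)}\kappa_g$, $\chi\le 1$), and a radial, exponentially weighted cutoff in the stability inequality \eqref{estabilityint}---are exactly the skeleton of the paper's Lemma \ref{Main} and of the first half of its proof of Theorem \ref{int1}, so that part of your plan is sound. Two bookkeeping corrections: in case (1) the coefficient that survives is $3(1-H^2)$, not $3-H^2$, and the rate $2\sqrt{1-H^2}$ arises by taking the weight $e^{-2ar}$ and choosing $3a^2=3-3H^2$ so that the zeroth-order term cancels (similarly $4-4H^2-\tfrac{7b^2}{4}=0$ gives $b=\tfrac{4\sqrt{1-H^2}}{\sqrt7}$ in case (2)); and in case (2) the paper uses the sectional bound on the tangent $2$-plane, $R_{1212}\ge-1$ via \eqref{eqseclem}, in addition to $\mathrm{Ric}(\nu,\nu)\ge-2$, to get the better coefficient structure \eqref{Main2}.

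The genuine gap is your endgame. The manipulations you describe do not produce a first-order inequality $A'(t)\le 2\sqrt{1-H^2}\,A(t)+\text{lower order}$ with absolute constants; the assertion that stability ``collapses to $(c-\alpha^2)\int f^2\le$ boundary error'' is by itself a $\lambda_0$-type estimate, not an area bound. What the weighted cutoff actually yields, after inserting the linear cutoff of slope $1/\eta$ and taking $\eta=t$, is the estimate \eqref{substituir05}, $\int_{B_p(t)}e^{-br}\le\tfrac{4\pi}{3}t^2$, i.e.\ $A(t)\le \tfrac{4\pi}{3}t^2e^{bt}$ --- exponential \emph{times a polynomial}. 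The content of the theorem (and the point of Munteanu--Sung--Wang's argument \cite{Mun}) is precisely the removal of the factor $t^2$ so that $C_1$ is absolute. The paper does this with a separate iteration for which your sketch offers no substitute: from \eqref{0002} and the mean value theorem applied to $t\mapsto\int_{B_p(t)}e^{-br}$ one derives the annulus-decay inequality
\begin{equation*}
\int_{B_p(s)\setminus B_p(s-\tau)}e^{-br}\le \Lambda\tau+\frac{\Lambda}{\tau}\int_{B_p(s-\tau)\setminus B_p(s-2\tau)}e^{-br},
\end{equation*}
iterates it with $\tau=2\Lambda$ (a convergent geometric series) to bound every weighted annulus by an absolute constant, converts this back through \eqref{0002} into $\mathrm{Area}\bigl(B_p(R)\setminus B_p(R-14\Lambda)\bigr)\le C e^{bR}$, and sums a telescoping decomposition of $B_p(R)$. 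Finally, your fallback of ``a constant depending only on the geometry at a fixed scale, absorbed into $C_1$'' is inconsistent with the statement: $C_1$ must be universal, and $L$ or $A$ at unit scale of an arbitrary stable $H$-surface is not a priori bounded by a universal constant; it only becomes so through the weighted estimates above. As written, your argument proves at best $A(R)\le CR^2e^{2\sqrt{1-H^2}\,R}$ (enough for the spectral corollary, but not the theorem as stated).
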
 

In the case where $M = \mathbb{H}^2 \times \mathbb{R}$, we can obtain an improved constant in Theorem \ref{int1}. More precisely,

\begin{theorem} \label{thmH2R}
Let $\Sigma$ be a stable $H$-surface in $\mathbb{H}^2 \times \mathbb{R}$ with $H^2 < \dfrac{1}{2}$. Assuming that the geodesic ball $B_p(R)$ does not intersect the boundary of $\Sigma$ or the cut locus of $p$ in $\Sigma$, then
$$A(R)\leq C e^{aR},$$
where $a=\dfrac{4\sqrt{\dfrac{1}{2}-H ^2}}{\sqrt{7}}$ and $C$ is a constant absolute.

\end{theorem}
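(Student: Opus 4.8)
The plan is to follow the scheme used to prove Theorem \ref{int1}(2), but to take advantage of the special product structure of $\mathbb{H}^2\times\mathbb{R}$ to replace the generic sectional curvature lower bound by sharper pointwise identities. Recall that for a stable $H$-surface the stability inequality \eqref{estabilityint} holds, and the first step is to rewrite the integrand $|h|^2+\mathrm{Ric}(\nu,\nu)$ in a form adapted to the ambient space. Using the Gauss equation on $\Sigma^2\subset\mathbb{H}^2\times\mathbb{R}$, together with the constant mean curvature condition $2H=\kappa_1+\kappa_2$, one has $|h|^2=\tfrac12(2H)^2+\tfrac12(\kappa_1-\kappa_2)^2=2H^2+\tfrac12(\kappa_1-\kappa_2)^2$, so $|h|^2\ge 2H^2$ always, with the traceless part controlled. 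In $\mathbb{H}^2\times\mathbb{R}$ the sectional curvatures are either $0$ or $-1$ depending on the position of the plane relative to the $\mathbb{R}$-factor; writing $\nu=\nu^\top+\langle\nu,\partial_t\rangle\partial_t$ and letting $\theta=\langle\nu,\partial_t\rangle$ be the angle function, one computes $\mathrm{Ric}(\nu,\nu)=-(1-\theta^2)$ and the Gauss-equation curvature term so that the combination appearing in \eqref{estabilityint} becomes, after using the Gauss equation for the intrinsic Gauss curvature $K_\Sigma$,
\begin{equation*}
|h|^2+\mathrm{Ric}(\nu,\nu) \;=\; 2H^2 - K_\Sigma - (1-\theta^2) + (\text{nonnegative traceless term}).
\end{equation*}

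The second step is to feed this into \eqref{estabilityint} with a logarithmic cutoff test function $f=\varphi(r)$, $r=\mathrm{dist}(p,\cdot)$, supported in $B_p(2R)$ and equal to $\log$-type on the annulus, exactly as in \cite{Mun}. Because $B_p(R)$ avoids the cut locus, the coarea formula and the second variation of arc length apply cleanly; combining the stability inequality with the Gauss--Bonnet integral of $K_\Sigma$ over geodesic balls (which contributes the $2\pi\chi$ term and the integral of geodesic curvature of $\partial B_p(r)$, controlled via the comparison with the model) produces a differential inequality for $A(r)=|B_p(r)|$ of Bishop type. The upshot should be an inequality of the shape $A'(r)\ge c\,A(r)$ for $r$ large, with $c$ depending only on $\tfrac12-H^2$; integrating gives the claimed exponential bound. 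The precise constant $a=\tfrac{4}{\sqrt7}\sqrt{\tfrac12-H^2}$ comes from optimizing the same way as in part (2) of Theorem \ref{int1}: one splits the coefficient $2(\tfrac12-H^2)$ (note the $\tfrac12$ rather than $1$, coming from the two flat directions being "half" of the ambient curvature) between a Poincaré-type term and the cutoff error, and the factor $4/\sqrt7$ is the value at the optimal split.

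The main obstacle I expect is bookkeeping the angle function $\theta$: it is not constant on $\Sigma$, and the term $-(1-\theta^2)$ in $\mathrm{Ric}(\nu,\nu)$ must be handled so as not to lose the improvement. The key observation that makes it work is that $\theta$ satisfies $\Delta\theta = -(|h|^2+\mathrm{Ric}(\nu,\nu))\theta$ on an $H$-surface in a product, i.e. $\theta$ is itself a Jacobi field; but more simply, since $\mathbb{H}^2\times\mathbb{R}$ has nonpositive sectional curvature one always has $-(1-\theta^2)\le 0$, so discarding that term only helps, and the factor $\tfrac12$ survives because $|h|^2\ge 2H^2$ uses the full second fundamental form while the curvature term $K\ge -1$ is only "seen" in one of the two coordinate directions. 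Making this last heuristic rigorous — showing that the effective curvature lower bound one may use is $-\tfrac12\cdot(\text{something})$ rather than $-1$ — is where the argument must be carried out carefully, and it is precisely the point at which $\mathbb{H}^2\times\mathbb{R}$ beats a general $K\ge -1$ manifold.
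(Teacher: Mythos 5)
Your outline coincides with the paper's strategy (Gauss equation in $\mathbb{H}^2\times\mathbb{R}$ via the angle function, Gauss--Bonnet on geodesic balls, stability with a cutoff, then the iteration of Theorem \ref{int1}(2) and \cite{Mun}), but the step that actually produces the improved constant is missing, and the shortcut you propose in its place would destroy it. In the paper the angle function $v=\langle\nu,\partial_t\rangle$ is tracked in \emph{both} places where ambient curvature enters: the Gauss equation $K_{\Sigma}=-v^{2}-\tfrac12|h|^{2}+2H^{2}$ feeds a term $2\int v^{2}\phi^{2}$ into the Gauss--Bonnet inequality, while applying stability to the full combination $\mathrm{Ric}(\nu,\nu)+|h|^{2}$ and then re-inserting $\mathrm{Ric}(\nu,\nu)=-(1-v^{2})$ leaves an extra $\int(1-v^{2})\phi^{2}$; the two add to $\int(1+v^{2})\phi^{2}\le 2\int\phi^{2}$, which is the source of the coefficient $2-4H^{2}$ in \eqref{substituir09}--\eqref{nova2} and hence of $a=\tfrac{4}{\sqrt7}\sqrt{\tfrac12-H^{2}}$ (one takes $\phi=e^{-ar/2}\psi$ so that $2-4H^{2}-\tfrac{7a^{2}}{4}=0$). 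Your suggestion to ``discard'' $-(1-\theta^{2})\le 0$ because it only helps is exactly the estimate $\mathrm{Ric}(\nu,\nu)\ge-2$, i.e.\ $K\ge-1$, used for a general ambient manifold: it returns you to Theorem \ref{int1}(2) with exponent $\tfrac{4}{\sqrt7}\sqrt{1-H^{2}}$, not the claimed one. Moreover the identity you write is not correct as stated: one has $|h|^{2}+\mathrm{Ric}(\nu,\nu)=2H^{2}-K_{\Sigma}-(1-\theta^{2})+\bigl(\tfrac12|h|^{2}-\theta^{2}\bigr)$, and the residual $\tfrac12|h|^{2}-\theta^{2}$ is not a nonnegative traceless term (it equals $-1$ on a slice $\mathbb{H}^{2}\times\{t\}$). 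So the ``effective $\tfrac12$'' heuristic, which you yourself flag as the delicate point, is precisely what remains unproved in your proposal.

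A second, smaller but genuine, problem is the claim that the argument yields $A'(r)\ge c\,A(r)$ and that integrating this gives the exponential bound: a differential inequality in that direction bounds the area from \emph{below}, not above. The upper bound $A(R)\le Ce^{aR}$ in the paper comes instead from the weighted mechanism of \cite{Mun}: after the cancellation of the $\int\phi^{2}$ term one first gets $\int_{B_p(t)}e^{-ar}\le \tfrac{4\pi}{3}t^{2}$, then a recursion between consecutive annuli (inequality \eqref{0003}), which is iterated and summed to control $\int_{B_p(R)\setminus B_p(R-\eta)}e^{-ar}$ and finally $A(R)$. You do invoke ``the same optimization as in Theorem \ref{int1}(2)'', so the intended machinery is the right one, but as written the passage from the stability--Gauss--Bonnet inequality to the area upper bound is misstated and would need to be replaced by that iteration.
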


Do Carmo and Peng \cite{MP} proved that a complete, orientable, stable minimal surface in $\mathbb R^3$ is flat. This result was also independently proved by D. Fischer-Colbrie and Schoen \cite{FCS} and  Pogolerov \cite{russo}. The proof relies on the parabolicity of stable minimal surfaces and area estimates of geodesic balls, as shown by Pogorelov \cite{russo} and Colding-Minicozzi \cite{CM2}. An improved area estimate, provided by Munteanu, Sung, and Wang, allowed for a proof of this result without invoking the parabolicity of stable minimal surfaces. It is worth mentioning that Chodosh and Li \cite{CL} recently extended do Carmo and Peng's result to the case of dimension four.

On the other hand, according to the seminal work of R. Bryant \cite{Bryant} (also see \cite{CHR, KKMS, UY}), the geometry of minimal surfaces in Euclidean space $\mathbb{R}^3$ is very similar to the geometry of $1$-surfaces in hyperbolic $3$-space $\mathbb{H}^3$. This similarity allows us to consider the following area estimate for stable $1$-surfaces in the three-dimensional hyperbolic space $\mathbb{H}^3$.

\begin{theorem}
\label{int2} Let $\Sigma $ be a stable  $1$-surface in $\mathbb{H}^{3}$. 
Then there exists a universal constant $R_{0}$ such that for any geodesic
ball $B_{p}\left( R\right)$ with no intersection with the boundary of $\Sigma,$ it holds
\begin{equation}
L\left( r\right) \leq 2\pi r\left( 1+\frac{10}{\ln R}\right)
\end{equation}%
and 
\begin{equation}\label{eq29}
A\left( r\right) \leq \pi\, r^2\left( 1+\frac{10}{\ln R}\right)
\end{equation}%
for all $r\leq \sqrt{R}$ and $R\geq R_{0}$. Here, $L(r)$ denotes the length of geodesic circle $\partial B_p(r)$ and $A(r)$ denotes the area of $B_p(r)$.
\end{theorem}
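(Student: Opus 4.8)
The plan is to reduce the problem, via the Gauss equation, to exactly the situation treated in \cite{Mun} for stable minimal surfaces in $\mathbb{R}^3$, and then to run a short one-variable argument. The first and conceptually decisive step is to record that for a stable $1$-surface $\Sigma$ in $\mathbb{H}^3$ the stability inequality \eqref{estabilityint} is \emph{formally identical} to the one for stable minimal surfaces in $\mathbb{R}^3$. Indeed, if $\kappa_1,\kappa_2$ are the principal curvatures then $\kappa_1+\kappa_2=2$, and the Gauss equation gives for the intrinsic curvature $K_\Sigma=-1+\kappa_1\kappa_2$; by the arithmetic--geometric mean inequality $\kappa_1\kappa_2\le 1$, hence $K_\Sigma\le 0$, while $|h|^2=(\kappa_1+\kappa_2)^2-2\kappa_1\kappa_2=2-2K_\Sigma$ and $\mathrm{Ric}(\nu,\nu)=-2$. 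Therefore $|h|^2+\mathrm{Ric}(\nu,\nu)=-2K_\Sigma$, and \eqref{estabilityint} reads
\[
\int_\Sigma|\nabla f|^{2}\ \ge\ -2\int_\Sigma K_\Sigma\,f^{2},\qquad f\in C_0^\infty(\Sigma).
\]

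Next I would work in geodesic polar coordinates around $p$ inside $B_p(R)$ (as in Theorem~\ref{thmH2R}, on a ball meeting neither $\partial\Sigma$ nor the cut locus of $p$, so that $B_p(r)$ is a disk and the metric is $dr^{2}+J(r,\theta)^{2}d\theta^{2}$ with $J(0,\theta)=0$, $J_r(0,\theta)=1$, $J_{rr}=-K_\Sigma J$). Put $L(r)=\int_0^{2\pi}J(r,\theta)\,d\theta$ and $A(r)=\int_0^r L$. Then $L(0)=0$, $L'(0)=2\pi$, $A'=L$, and $L''(r)=\int_{\partial B_p(r)}(-K_\Sigma)\,ds\ge 0$, so $L$ is convex, the ratio $L(r)/r$ is nondecreasing, and in particular $L(r)\ge 2\pi r$, $A(r)\ge\pi r^{2}$ for free. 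Inserting $f=\phi(\mathrm{dist}(p,\cdot))$ into the stability inequality, using $|\nabla\,\mathrm{dist}(p,\cdot)|=1$, the co-area formula, the identity $-2\int_\Sigma K_\Sigma\,\phi^2=2\int_0^\infty\phi^2 L''$, and one integration by parts (with $L'(0)=2\pi$), I obtain the one-variable inequality
\[
-\int_0^\infty(\phi')^{2}L\,dt\ -\ 4\int_0^\infty\phi\,\phi'\,L'\,dt\ \le\ 4\pi\,\phi(0)^{2},
\]
valid for any Lipschitz $\phi$ with compact support in $[0,R)$.

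Then I would plug in the logarithmic cutoff $\phi\equiv 1$ on $[0,r]$, $\phi(t)=\ln(R/t)/\ln(R/r)$ on $[r,R]$, $\phi\equiv 0$ on $[R,\infty)$. One more integration by parts rewrites the left-hand side as $\ln^{-2}(R/r)$ times $-\,\frac{4\ln(R/r)}{r}L(r)+\int_r^R\frac{3+4\ln(R/t)}{t^{2}}L(t)\,dt$; then the monotonicity $L(t)\ge\frac{L(r)}{r}t$ for $t\ge r$, together with the elementary identity $\int_r^R\frac{3+4\ln(R/t)}{t}\,dt=3\ln(R/r)+2\ln^{2}(R/r)$, collapses everything to
\[
\frac{L(r)}{r}\,\bigl(2\ln^{2}(R/r)-\ln(R/r)\bigr)\ \le\ 4\pi\ln^{2}(R/r),
\]
that is, $L(r)\le 2\pi r\bigl(1+(2\ln(R/r)-1)^{-1}\bigr)$. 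For $r\le\sqrt R$ one has $\ln(R/r)\ge\frac12\ln R$, so $L(r)\le 2\pi r\bigl(1+(\ln R-1)^{-1}\bigr)\le 2\pi r\bigl(1+\tfrac{10}{\ln R}\bigr)$ once $R\ge R_0$ for a suitable absolute constant $R_0$ (e.g.\ $R_0=e^{2}$; the constant $10$ is very generous). Since this bound holds at every radius $\le\sqrt R$, integrating it over $[0,r]$ gives $A(r)\le\pi r^{2}\bigl(1+\tfrac{10}{\ln R}\bigr)$, which finishes the argument.

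The conceptual content is entirely in the first step; after that everything is a streamlined version of the computation in \cite{Mun}, and it is worth noting that, unlike in the classical treatment, no a priori quadratic area bound of Colding--Minicozzi/Pogorelov type needs to be invoked here---convexity of $L$ does all the work. The point requiring the most care is the cut locus: I would either read Theorem~\ref{int2} with the same convention as Theorem~\ref{thmH2R}, or verify that cut points contribute to the inequalities above with the favourable sign (so that $L$ still satisfies them distributionally on the whole metric sphere), exactly as in \cite{Mun}.
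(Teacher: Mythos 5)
Your argument is correct, and it rests on the same two pillars as the paper's proof: a one-variable integral inequality for $L(r)$ coming from stability (your inequality $-\int_0^\infty(\phi')^2L - 4\int_0^\infty\phi\phi'L'\le 4\pi\phi(0)^2$ is, after one integration by parts, exactly \eqref{substituir12} with $\alpha=H=1$, which you derive directly from the pointwise identity $|h|^2+\mathrm{Ric}(\nu,\nu)=-2K_\Sigma$ for $1$-surfaces in $\mathbb{H}^3$ rather than through Lemma \ref{Main} and Corollary \ref{Corlemma}), together with the monotonicity of $L(r)/r$ forced by $K_\Sigma\le 0$, which is the paper's \eqref{eq13}. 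Where you genuinely diverge is the execution: the paper uses the test function $\phi=\ln(R+1)-\ln(r+1)$ on all of $[0,R]$ and argues by contradiction on $[\sqrt R,R]$, obtaining some $r_0\in[\sqrt R,R]$ with $L(r_0)/r_0\le 2\pi(1+10/\ln(R+1))$ and then transporting the bound down to $r\le\sqrt R$ via monotonicity; you instead take a plateau-plus-logarithm cutoff ($\phi\equiv1$ on $[0,r]$) and insert the monotonicity $L(t)\ge \frac{L(r)}{r}\,t$ directly into the integral, which yields the pointwise bound $L(r)\le 2\pi r\bigl(1+(2\ln(R/r)-1)^{-1}\bigr)$ with no contradiction argument; I checked the computation (the elementary integral identity and the final numerics included) and it is sound, and arguably shorter and more explicit than the paper's. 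Two small remarks: your cut-locus caveat is the right instinct, but the delicate point is not the stability-derived inequality (where $\chi(B_p(r))\le 1$ works in your favour, as in Lemma \ref{Main}) but precisely the monotonicity of $L(r)/r$, which requires staying inside the cut locus; since the paper's own proof uses \eqref{eq13} under the same implicit hypothesis, reading the statement with the convention of Theorem \ref{thmH2R}, as you propose, is the appropriate fix and not a gap relative to the paper. Also, your aside that no Colding--Minicozzi/Pogorelov quadratic area bound is needed applies equally to the paper's proof of this theorem, so it is not a point of difference.
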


As a direct application of the above result, we present a new proof of the following well-known result established by da Silveira \cite{Silveira}. 

\begin{corollary}\label{corint2}
Let $\Sigma $ be a complete noncompact stable $1$-surface in $\mathbb{H}^{3}$, then $$A(r)\leq \pi r^2$$ for all $r > 0$. In particular, this implies that $\Sigma$ is a horosphere.
\end{corollary}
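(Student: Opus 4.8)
The plan is to combine the area bound of Theorem \ref{int2} with the parabolicity that this bound forces, and then feed the stability inequality \eqref{estabilityint} with a sequence of cut-off functions converging to the constant $1$. The point is that on a parabolic surface the stability inequality essentially "sees" the constant test function, which will pin down the intrinsic curvature.

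\textbf{Global area bound and parabolicity.} Since $\Sigma$ is complete and noncompact, geodesic balls $B_p(R)$ exist for every $R>0$ and, as $\partial\Sigma=\varnothing$, none of them meets the boundary of $\Sigma$. Fixing $r>0$ and applying Theorem \ref{int2} with any $R\geq\max\{R_0,r^2\}$ gives $A(r)\leq \pi r^2\bigl(1+\tfrac{10}{\ln R}\bigr)$; letting $R\to\infty$ yields $A(r)\leq\pi r^2$ for every $r>0$. Consequently $\int_1^\infty \frac{r\,dr}{A(r)}\geq\frac1\pi\int_1^\infty\frac{dr}{r}=\infty$, so by the standard volume-growth criterion for parabolicity (Cheng--Yau, Grigor'yan, Karp) the surface $\Sigma$ is parabolic. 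On the other hand, writing $k_1,k_2$ for the principal curvatures, we have $k_1+k_2=2$, and the Gauss equation in $\mathbb{H}^3$ gives $K_\Sigma=-1+k_1k_2$ and $|h|^2=k_1^2+k_2^2=4-2k_1k_2=2-2K_\Sigma$; since $\mathrm{Ric}(\nu,\nu)=-2$ we obtain the key identity $|h|^2+\mathrm{Ric}(\nu,\nu)=-2K_\Sigma$, and by the arithmetic--geometric mean inequality $k_1k_2\le1$, so $K_\Sigma\le0$ and $-2K_\Sigma\ge0$.

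\textbf{Rigidity.} Parabolicity provides a sequence $\varphi_i$ of Lipschitz functions with compact support, $0\le\varphi_i\le1$, $\varphi_i\to1$ uniformly on compact sets, and $\int_\Sigma|\nabla\varphi_i|^2\to0$ (one may smooth them to lie in $C_0^\infty(\Sigma)$ if desired). Plugging $f=\varphi_i$ into \eqref{estabilityint} and using the identity above,
$$\int_\Sigma(-2K_\Sigma)\varphi_i^2\,dA\le\int_\Sigma|\nabla\varphi_i|^2\,dA\longrightarrow0,$$
so Fatou's lemma (applicable since $-2K_\Sigma\ge0$) gives $\int_\Sigma(-2K_\Sigma)\,dA\le0$. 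As the integrand is nonnegative pointwise, $K_\Sigma\equiv0$, hence $k_1k_2\equiv1$; together with $k_1+k_2\equiv2$ this forces $k_1=k_2\equiv1$. Thus $\Sigma$ is totally umbilic with principal curvatures equal to $1$, and by the classification of totally umbilic surfaces in $\mathbb{H}^3$, together with completeness, $\Sigma$ is a horosphere; in particular $A(r)=\pi r^2$ for all $r>0$.

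\textbf{Main obstacle.} The genuinely delicate point is the last step: one must verify that the cut-off functions furnished by parabolicity are admissible in \eqref{estabilityint} (density of Lipschitz compactly supported functions) and justify the limit — here parabolicity, which is exactly what the area estimate of Theorem \ref{int2} buys us, is what makes the Dirichlet energies tend to zero. It is worth noting that a naive alternative, namely extracting a matching \emph{lower} area bound from comparison geometry (using $K_\Sigma\le0$), would fail: a complete non-simply-connected flat surface can have area growth strictly slower than $\pi r^2$, so routing the argument through parabolicity rather than volume comparison is essential.
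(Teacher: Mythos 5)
Your proposal is correct, and its first half is exactly the paper's: let $R\to\infty$ in \eqref{eq29} to get $A(r)\le\pi r^2$, then deduce parabolicity from quadratic area growth (the paper cites Proposition 1.37 of \cite{CM}; your integral criterion gives the same conclusion). The rigidity step, however, takes a genuinely different route. The paper invokes Fischer-Colbrie--Schoen \cite{FCS} to produce a positive solution $g$ of the Jacobi equation $\Delta g+(\mathrm{Ric}(\nu,\nu)+|h|^2)g=0$, observes that $\Delta g=(2-|h|^2)g\le 0$ because $|h|^2\ge 2H^2=2$ and $\mathrm{Ric}(\nu,\nu)=-2$, concludes that $g$ is constant since positive superharmonic functions on a parabolic surface are constant, and then reads off $|h|^2+\mathrm{Ric}(\nu,\nu)\equiv 0$, i.e. umbilicity, from the Jacobi equation. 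You instead use the zero-capacity (null-sequence) characterization of parabolicity and feed the cut-offs directly into the stability inequality \eqref{estabilityint}, together with the pointwise identity $|h|^2+\mathrm{Ric}(\nu,\nu)=-2K_\Sigma\ge 0$ valid for $H=1$ in $\mathbb{H}^3$, obtaining $K_\Sigma\equiv 0$ and hence $k_1=k_2\equiv 1$ by Fatou; this is the classical do Carmo--Peng-style argument. Both mechanisms rest on parabolicity; yours is more self-contained in that it bypasses the existence theorem for positive Jacobi solutions, at the cost of the capacity characterization of parabolicity and a routine density argument admitting Lipschitz compactly supported test functions in \eqref{estabilityint}, while the paper's route is shorter given \cite{FCS}. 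From umbilicity onward the two proofs coincide: a complete noncompact totally umbilic surface with principal curvatures $1$ in $\mathbb{H}^3$ is a horosphere.
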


Another relevant topic closely related to the area estimates of geodesic balls is the study of the spectrum of a complete manifold. We recall that the bottom spectrum of a complete manifold $\Sigma$ can be defined as the optimal constant in the Poincaré inequality

\begin{equation}
\lambda_0(\Sigma)\int_{\Sigma} \phi^2 \leq \int_{\Sigma} |\nabla \phi|^2,  
\end{equation}
where $\phi \in C^{\infty}_0(\Sigma)$.
According to Li and Wang \cite{Li}, we have
\begin{equation}\label{lnstimatelambda0}
\lambda _{0}(\Sigma) \leq \dfrac{1}{4}\left( \liminf _{R\rightarrow \infty} \dfrac{\ln A_{p}(R)}{R}  \right) ^2,
\end{equation}
where $A_{p}(R)$ denotes the area of a geodesic ball $B_{p}(R)$ centered at the point $p$ with radius $R$.

By applying the inequality \eqref{lnstimatelambda0} to a stable $H$-surface and using Theorem \ref{int1}, we can provide an upper estimate of the bottom spectrum. Specifically, we obtain the following:

\begin{corollary}
Let $\Sigma$ be a stable complete $H$-surface, with $|H| < 1$, in a three-dimensional Riemannian manifold $M$:
\begin{enumerate}
\item[\textbf{a)}] If the scalar curvature $S$ of $M$ satisfies $S\geq -6$, then
$$\lambda _{0}(\Sigma)\leq 1-H^2.$$
\item[\textbf{b)}] If the sectional curvature $K$ of $M$ satisfies $K\geq -1$, then
$$\lambda _{0}(\Sigma)\leq \dfrac{4-4H^2}{7}.$$
\end{enumerate}
\end{corollary}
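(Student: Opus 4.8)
The plan is simply to feed the area bounds of Theorem~\ref{int1} into the Li--Wang estimate \eqref{lnstimatelambda0}; the computation is routine, so the only real task is to check that the hypotheses line up and that the domain of validity is the right one.

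First I would dispose of the compact case: if $\Sigma$ is closed, then taking $\phi\equiv 1$ in the Poincar\'e inequality forces $\lambda_0(\Sigma)=0$, while $1-H^2>0$ and $\tfrac{4-4H^2}{7}>0$ because $|H|<1$, so both inequalities hold trivially. Hence assume $\Sigma$ is noncompact and fix a point $p\in\Sigma$. Since $\Sigma$ has empty boundary, every geodesic ball $B_p(R)$ with $R>0$ is disjoint from $\partial\Sigma$, so Theorem~\ref{int1} is available for all $R$.

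In case \textbf{a)}, Theorem~\ref{int1}(1) gives $A_p(R)\le C_1 e^{2\sqrt{1-H^2}\,R}$ for every $R>0$. Taking logarithms, dividing by $R$ and letting $R\to\infty$ yields $\liminf_{R\to\infty}\frac{\ln A_p(R)}{R}\le 2\sqrt{1-H^2}$, and then \eqref{lnstimatelambda0} gives $\lambda_0(\Sigma)\le\frac14\big(2\sqrt{1-H^2}\big)^2=1-H^2$. Case \textbf{b)} is identical: Theorem~\ref{int1}(2) supplies the exponential rate $\frac{4\sqrt{1-H^2}}{\sqrt7}$, and squaring and dividing by $4$ produces $\lambda_0(\Sigma)\le\frac{4(1-H^2)}{7}=\frac{4-4H^2}{7}$. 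There is no genuine obstacle here beyond bookkeeping; the one point worth flagging is that completeness of $\Sigma$ is used twice — once so that Theorem~\ref{int1} applies for arbitrarily large $R$ (through the "no boundary" hypothesis), and once so that $\lambda_0(\Sigma)$ and the $\liminf$ appearing in \eqref{lnstimatelambda0} are meaningful.
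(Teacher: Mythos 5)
Your proposal is correct and follows exactly the route the paper intends: plugging the exponential area bounds of Theorem~\ref{int1} into the Li--Wang inequality \eqref{lnstimatelambda0} and computing $\tfrac14$ of the squared exponential rate. The extra remarks on the compact case and on completeness are harmless bookkeeping and do not change the argument.
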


To obtain an estimate for the bottom spectrum of $H$-surfaces in $\mathbb{H}^2 \times \mathbb{R}$, we can once again apply Theorem \ref{thmH2R} together with the inequality \eqref{lnstimatelambda0}. This leads to the following estimate

\begin{corollary}\label{corolario}
Let $\Sigma$ be a stable $H$-surface in $\mathbb{H}^2 \times \mathbb{R}$ with $H^2 < \dfrac{1}{2}$. Assuming that the geodesic ball $B_p(R)$ does not intersect the boundary of $\Sigma$ or the cut locus of $p$ in $\Sigma$, then
$$\lambda _0(\Sigma) \leq \dfrac{2-4H ^2}{7}.$$
\end{corollary}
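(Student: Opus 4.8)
The plan is to combine the area estimate of Theorem \ref{thmH2R} with the Li--Wang upper bound \eqref{lnstimatelambda0} for the bottom of the spectrum; no new ideas are needed, so the proof will be short.

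First, under the stated hypotheses Theorem \ref{thmH2R} produces an absolute constant $C>0$ with $A_p(R)\leq C\,e^{aR}$, where $a=\dfrac{4\sqrt{\frac12-H^2}}{\sqrt 7}$, valid for every radius $R$ for which $B_p(R)$ avoids both $\partial\Sigma$ and the cut locus of $p$ in $\Sigma$. Taking logarithms and dividing by $R$ gives
\[
\frac{\ln A_p(R)}{R}\leq \frac{\ln C}{R}+a ,
\]
and hence $\displaystyle \liminf_{R\to\infty}\frac{\ln A_p(R)}{R}\leq a$.

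Next I would substitute this into the Li--Wang inequality \eqref{lnstimatelambda0}:
\[
\lambda_0(\Sigma)\leq \frac14\left(\liminf_{R\to\infty}\frac{\ln A_p(R)}{R}\right)^2\leq \frac{a^2}{4}=\frac14\cdot\frac{16\left(\frac12-H^2\right)}{7}=\frac{2-4H^2}{7},
\]
which is precisely the asserted bound. (Note that the hypothesis $H^2<\frac12$ is exactly what makes $a$, and therefore this estimate, meaningful.)

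The only point that needs a word of care is that the $\liminf$ in \eqref{lnstimatelambda0} is taken as $R\to\infty$, so the area bound must be available for arbitrarily large radii; this is the role of the assumption that $B_p(R)$ meets neither the boundary of $\Sigma$ nor the cut locus of $p$ (it is enough that this hold along some sequence $R_k\to\infty$, along which the $\liminf$ may be evaluated). Granting that, the argument is a one-line substitution, and there is no genuine obstacle here — all the analytic content is already contained in Theorem \ref{thmH2R} and in the inequality \eqref{lnstimatelambda0}.
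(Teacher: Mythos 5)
Your proposal is correct and is precisely the argument the paper intends: apply the area bound $A_p(R)\leq Ce^{aR}$ of Theorem \ref{thmH2R} and insert it into the Li--Wang inequality \eqref{lnstimatelambda0}, giving $\lambda_0(\Sigma)\leq a^2/4=\frac{2-4H^2}{7}$. Your remark about needing the area estimate along radii $R\to\infty$ is a sensible reading of the hypothesis and matches how the paper uses it.
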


We note that Corollary \ref{corolario} has already been obtained by Bérard, Castillon, and Cavalcante in \cite{Petrucio} using different techniques.

In higher dimensions, our approach does not work to obtain volume estimates for geodesic balls. However, by following the technique employed in Theorem 14 of \cite{Mun}, we have successfully estimated the bottom spectrum of a complete stable $H$-hypersurface immersed in a manifold with sectional curvature bounded from below. More precisely,

\begin{theorem}\label{comparasionthmupfour}
Let $\Sigma$ be a complete stable $H$-hypersurface in a $(n+1)$-manifold $M$, where $n=3$ or $n=4$. If the sectional curvature $K$ of $M$ satisfies $K \geq -1$, the following estimates hold:
\begin{enumerate}
    \item [\textbf{a)}] For $n=3$, if $|H|< \dfrac{\sqrt{10}}{3}$, then
    $$\lambda _0(\Sigma) \leq \frac{10-9H^2}{4};$$
    \item[\textbf{b)}] For $n=4$, if $|H|< \dfrac{\sqrt{7}}{2}$, then
    $$\lambda _0(\Sigma) \leq 24-12H^2.$$
\end{enumerate}
\end{theorem}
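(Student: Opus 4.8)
The plan is to run the Cheng–Yau / Colding–Minicozzi type argument that converts the stability inequality into a gradient estimate for a positive Jacobi solution, and then feed the resulting Laplacian-comparison-style inequality into the Li–Wang bound \eqref{lnstimatelambda0}. Concretely: let $u>0$ solve $Lu=0$ on $\Sigma$, where $L=\Delta+|h|^2+\mathrm{Ric}(\nu,\nu)$. Setting $v=\ln u$, one gets $\Delta v=-|\nabla v|^2-|h|^2-\mathrm{Ric}(\nu,\nu)$. The key is to control $\mathrm{Ric}(\nu,\nu)$ and $|h|^2$ from below in terms of the mean curvature $H$ and the ambient sectional curvature bound $K\geq -1$. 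Using the Gauss equation to write the intrinsic quantities in terms of $h$, $H$, and $K$, together with the algebraic inequality $|h|^2\geq \tfrac{(nH)^2}{n}=nH^2$ (since $h$ has trace $nH$) and the bound $\mathrm{Ric}_M(\nu,\nu)\geq -n$ coming from $K\geq -1$, one obtains a pointwise lower bound of the shape $|h|^2+\mathrm{Ric}(\nu,\nu)\geq nH^2-n$ — but this crude bound is not enough, so instead I would combine the stability inequality \eqref{estabilityint} with the Bochner-type argument on $v$ exactly as in Theorem 14 of \cite{Mun}, carrying the extra $H$-terms through.

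The heart of the matter is the following. Plug the test function $f=\phi\,|\nabla v|$ (or $f=\phi\, u^{1/2}$-type cutoffs, depending on which version of the argument one mimics) into \eqref{estabilityint}, use the refined Kato inequality available for Jacobi-type solutions in dimensions $n=3,4$ (this is precisely why the dimension restriction appears — the Kato improvement constant $\tfrac{1}{n-1}$ interacts with the curvature terms favourably only for small $n$), and integrate by parts. After absorbing gradient terms and using $K\geq -1$ to estimate the ambient curvature contributions in the Bochner formula for $|\nabla v|$, one arrives at an inequality of the form
\begin{equation*}
\Lambda(n,H)\int_\Sigma \phi^2 u^{2\alpha} \leq \int_\Sigma |\nabla \phi|^2 u^{2\alpha}
\end{equation*}
for a suitable power $\alpha$ and an explicit constant $\Lambda(n,H)$; the constants $\tfrac{10-9H^2}{4}$ for $n=3$ and $24-12H^2$ for $n=4$ are what this optimization yields, and the hypotheses $|H|<\sqrt{10}/3$ and $|H|<\sqrt7/2$ are exactly the conditions that make $\Lambda(n,H)>0$. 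Since $u^{2\alpha}$ is a positive solution-type weight, a standard argument (testing against $\phi = $ log-cutoffs on geodesic annuli, as in Li–Wang) upgrades this weighted Poincaré inequality to the bound $\lambda_0(\Sigma)\geq \Lambda(n,H)$ — wait, the inequality goes the other way: one uses the weighted inequality together with \eqref{lnstimatelambda0} and the sub-exponential volume growth it forces to conclude $\lambda_0(\Sigma)\leq\Lambda(n,H)$.

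The main obstacle I anticipate is the bookkeeping in the Bochner formula: one must track every term involving $H$ when commuting derivatives and applying the Gauss and Codazzi equations, since a constant mean curvature surface is not totally geodesic and $\nabla h$ does not vanish, so there will be additional $H|\nabla v|$ and $H|h|$ cross terms compared with the minimal case in \cite{Mun}. These must be dominated using Cauchy–Schwarz with carefully chosen weights so as not to degrade the final constant below positivity; getting the optimal trade-off is what pins down the precise numerical thresholds. The dimension restriction $n\in\{3,4\}$ is genuine: for $n\geq 5$ the refined Kato constant is too weak for the curvature terms from $K\geq-1$ to be absorbed, which is consistent with the remark in the paper that the higher-dimensional volume estimate fails.
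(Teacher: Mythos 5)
There is a genuine gap, and it is in the very mechanism by which you pass from your weighted inequality to the upper bound on $\lambda_0$. You propose to reach $\Lambda(n,H)\int_\Sigma\phi^2u^{2\alpha}\le\int_\Sigma|\nabla\phi|^2u^{2\alpha}$ (with $u$ a positive Jacobi solution) and then conclude via sub-exponential volume growth and the Li--Wang estimate \eqref{lnstimatelambda0}; but a weighted Poincar\'e inequality with a Jacobi-solution weight does not force any area growth of intrinsic geodesic balls, and the paper explicitly states that the area-estimate route fails for $n\ge 3$ --- that is exactly why Theorem \ref{comparasionthmupfour} is proved differently. The paper's argument assumes $\lambda_0(\Sigma)>0$ (hence $\Sigma$ is nonparabolic and carries a minimal positive Green's function $G$), inserts $\phi=|\nabla G|^{1/2}\varphi$ directly into the Poincar\'e inequality defining $\lambda_0$, and controls the right-hand side using Bochner's formula together with the refined Kato inequality $|\nabla^2G|^2\ge\frac{n}{n-1}|\nabla|\nabla G||^2$ for the \emph{harmonic} function $G$, the stability inequality tested with $|\nabla G|^{1/2}\varphi$, and a separate integrability lemma (Lemma \ref{teoremadegreen}: $\int_{\Sigma\setminus B_p(1)}|\nabla G|^4G^{-3}\ln^{-2q}(1+G^{-1})<\infty$ for $n\le4$) which is what allows logarithmic cutoffs in $G$ to kill the error terms. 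Your proposal contains no substitute for this lemma, and your own parenthetical ``wait, the inequality goes the other way'' flags precisely the missing step: nothing in your outline converts the weighted inequality into $\lambda_0(\Sigma)\le\Lambda(n,H)$. Note also that with $v=\ln u$ for a Jacobi solution one has $\Delta v=-|\nabla v|^2-|h|^2-\mathrm{Ric}(\nu,\nu)$, so the refined Kato improvement you invoke (proved for harmonic functions) is not available off the shelf.

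The second gap concerns the constants themselves. The $H^2$-dependence in the paper comes from a specific algebraic inequality (Proposition 3.2 of \cite{zhou}), namely $h_{11}nH-\sum_{\alpha}h_{1\alpha}^2\ge-|h|^2+\frac{n^2(5-n)H^2}{4}$, which is fed into the Gauss-equation expression for $\mathrm{Ric}_\Sigma(e_1,e_1)$ with $e_1=\nabla v/|\nabla v|$; this is where the quantity $\frac{n^2(5-n)H^2}{4}$, and ultimately the thresholds $|H|<\sqrt{10}/3$ and $|H|<\sqrt{7}/2$ and the bounds $\frac{(n-1)(8n-4-n^2(5-n)H^2)}{20-4n}$, originate. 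The tools you list ($|h|^2\ge nH^2$, $\mathrm{Ric}(\nu,\nu)\ge-n$, and ``carrying the extra $H$-terms through'' the minimal-surface argument of \cite{Mun}) do not produce this term, and you acknowledge the crude bound is insufficient without supplying the replacement; the stated constants are therefore asserted rather than derived. The dimension restriction you attribute solely to the refined Kato constant is only half the story: it enters through the factor $\frac{1}{n-1}-\frac14$ \emph{and} through the $(5-n)$ factor in the algebraic inequality above.
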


We should remark that the bounds on $|H|$ in Theorem \ref{comparasionthmupfour}  also play a significant role in \cite{zhou}. In particular, they appear in determining the number of ends of a weakly stable $H$-hypersurface in $\mathbb{H}^{n+1}$, with $n=3$ or $n=4$.

The organization of the paper is as follows. In the upcoming section, we will establish the notation and show some preliminary results. The last section will be dedicated to providing the proofs of the main results outlined in this introduction.\\

\noindent
{\bf Acknowledgments.} We would like to thank Professor Luciano Mari for his interest and related discussions.

\section{Preliminaries}

Let $x: \Sigma \rightarrow M$ be an isometric immersion of a surface $\Sigma$ in a Riemannian three-dimensional manifold $M$. Let $X: (-\varepsilon, \varepsilon) \times \Sigma \rightarrow M$ be a differentiable variation of $M$ where $X(0,p) = x(p)$ and $X(t,p) = X_t(p)$ is an immersion of $\Sigma$ in $M$ for all $|t|<\varepsilon$. For each $t \in (-\varepsilon,\varepsilon)$, we define the \textit{area function} $\mathcal{A}(t) = \mathrm{Area}(X_t)$ and the \textit{volume function} $\mathcal{V}(t)$ as

$$\mathcal{V}(t) = \int_{[0,t]\times \Sigma} X^*dV,$$
which measures the signed volume enclosed between $X_0 = x$ and $X_t$. Furthermore, we consider the functional
$$\mathcal{J}(t) = \mathcal{A}(t) - 2c\mathcal{V}(t), \ \ \ \mathrm{with} \ c \in \mathbb{R}.$$ By applying the first variation formulas for area and volume, we can deduce that $\Sigma$ is a critical point of $\mathcal{J}(t)$ if and only if $\Sigma$ has constant mean curvature $H = c$. Such a surface is commonly referred to as an $H$-surface. In this case, we introduce the \textit{Jacobi operator} $L$ on $\Sigma$, which is given by
$$L=\Delta +|h|^2+\mathrm{Ric(\nu , \nu )},$$
where $h$ is the second fundamental form of $\Sigma$, $\mathrm{Ric}$ is the Ricci curvature of $M$ and $\nu $ is the outward unit normal vector field. If $\Sigma$ is an $H$-surface, the second variation formula of the functional $\mathcal{J}(t)$ can be expressed as
\begin{eqnarray*}
   Q(f,f) & = & \dfrac{d^{2}}{dt^2}\Big|_{t=0}\mathcal{J}(t) = -\int _{\Sigma} fLfdA \\
    &=& \int _{\Sigma} \left[|\nabla f|^2-(|h|^2+\mathrm{Ric(\nu , \nu )})f^2\right]dA, \ \ \ \forall f \in C_{0}^{\infty}(\Sigma).
\end{eqnarray*}

An $H$-surface $\Sigma $ is said to be \textit{strongly stable} if $Q(f,f) \geq 0$. This concept is equivalent to the positiveness of the first eigenvalue of the Jacobi operator $L$ and the existence of a positive smooth solution $u$ for the equation $Lu = 0$ (see \cite{FC}). Note that if $\Sigma$ is a strongly stable $H$-surface in $M$, then
\begin{equation}
    \label{estabilidade}
    \int _{\Sigma} (|h|^2+\mathrm{Ric(\nu ,\nu )})f^2 \leq \int _{\Sigma} |\nabla f|^2, \ \ \ \forall f \in C_{0}^{\infty}(\Sigma).
\end{equation}
 We will refer to \eqref{estabilidade} as \textit{the stability inequality}. We say that a $CMC$ surface is \textit{weakly stable} if the stability inequality \eqref{estabilidade} holds for every test function $f \in C_{0}^{\infty}(\Sigma)$ such that $\int_{\Sigma} f = 0$. A strongly stable $H$-surface is also weakly stable. However, the converse may not be true in general. For instance, the standard embedding of $\mathbb{S}^2$ into $\mathbb{S}^3$ as a totally geodesic surface is weakly stable but not strongly stable.

In this work, we focus exclusively on the study of strongly stable $H$-surfaces, and for brevity, we refer to them simply as \textit{stable} $H$-surfaces.

 The stability of an $H$-surface $\Sigma$ is closely related to an upper bound on the bottom spectrum of the Laplacian on $\Sigma$, denoted by $\lambda_0(\Sigma)$. The bottom spectrum $\lambda_0(N)$ of a complete manifold $N$ can be characterized as the optimal constant in the Poincaré inequality
$$\lambda_0(N) \int_N f^2 \leq \int_N |\nabla f|^2, \ \ \ \forall f \in C^{\infty}_0(N). $$
Furthermore, according to \cite{Li}, for any $p \in N$,
\begin{equation}\label{eq14}
\lambda _{0}(N) \leq \dfrac{1}{4}\left( \liminf _{R\rightarrow \infty} \dfrac{\ln A_{p}(R)}{R}  \right) ^2,
\end{equation}
where $A_{p}(R)$ denotes the area of a geodesic ball $B_{p}(R)$ centered at $p$ with radius $R$.

 An important concept for complete Riemannian manifolds is the \textit{parabolicity}. We recall that a complete manifold $(N,g)$ is \textit{nonparabolic} if it admits a positive symmetric Green's function. Otherwise, we say that it is \textit{parabolic} (see \cite{LiTam}). It is well-known that $M$ is nonparabolic if and only if it admits a nonconstant bounded superharmonic function. Besides, another relevant result, obtained by Colding and Minicozzi \cite{CM}, is the following

\begin{theorem*}
\label{prop}
If $\Sigma$ is a complete surface so that for all $r > 0$ we have that a geodesic ball $B_p(r)$ in $\Sigma$ satisfies
$$A(r) = Area (B_{p}(r)) \leq C r^2,$$
then $\Sigma$ is parabolic.
\end{theorem*}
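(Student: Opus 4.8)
The plan is to use the capacity characterization of parabolicity rather than arguing directly with Green's functions. Recall that a complete surface is parabolic precisely when every compact set has zero capacity, equivalently when the relative capacity $\operatorname{cap}(\overline{B_p(1)}, B_p(R))$ tends to $0$ as $R \to \infty$, where
$$\operatorname{cap}(\overline{B_p(1)}, B_p(R)) = \inf\left\{ \int_\Sigma |\nabla \phi|^2 : \phi \in C_0^\infty(\Sigma),\ \phi = 1 \text{ on } \overline{B_p(1)},\ \operatorname{supp}\phi \subset B_p(R) \right\}.$$
So it suffices to produce test functions whose Dirichlet energy decays to $0$ as $R\to\infty$.

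First I would restrict to radial test functions $\phi = \psi(\rho)$, where $\rho(x) = d(p,x)$ and $\psi$ is a nonincreasing Lipschitz function with $\psi(1) = 1$ and $\psi(R) = 0$. Since $|\nabla \rho| = 1$ almost everywhere, the coarea formula gives
$$\int_\Sigma |\nabla \phi|^2 = \int_1^R \psi'(r)^2 L(r)\,dr,$$
where $L(r) = A'(r)$ is the length of the geodesic circle $\partial B_p(r)$ for almost every $r$. Minimizing the right-hand side over admissible $\psi$ by Cauchy--Schwarz (the optimal profile satisfies $\psi'(r) \propto 1/L(r)$) yields the bound
$$\operatorname{cap}(\overline{B_p(1)}, B_p(R)) \le \left( \int_1^R \frac{dr}{L(r)} \right)^{-1}.$$
Thus the problem reduces to showing $\int_1^\infty \frac{dr}{L(r)} = \infty$.

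This is where the quadratic area growth enters, through a dyadic Cauchy--Schwarz argument. On each annulus $[2^k, 2^{k+1}]$ one has
$$4^k = \left( \int_{2^k}^{2^{k+1}} dr \right)^2 \le \left( \int_{2^k}^{2^{k+1}} L(r)\,dr \right)\left( \int_{2^k}^{2^{k+1}} \frac{dr}{L(r)} \right) = \big(A(2^{k+1}) - A(2^k)\big) \int_{2^k}^{2^{k+1}} \frac{dr}{L(r)}.$$
Using $A(2^{k+1}) - A(2^k) \le A(2^{k+1}) \le 4C\,4^k$, each annular integral is bounded below by $1/(4C)$, and summing over $k$ forces $\int_1^\infty \frac{dr}{L(r)} = \infty$. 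Hence the relative capacity tends to $0$ and $\Sigma$ is parabolic; note that this is precisely the borderline case of the volume test $\int^\infty \frac{r\,dr}{A(r)} = \infty$ due to Grigor'yan, Karp, and Varopoulos, and it is the exponent $2$ that makes the series behave like the harmonic series.

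The main obstacle is analytic rather than conceptual: justifying the coarea step and the identity $L(r) = A'(r)$ requires care because the distance function is only Lipschitz and the geodesic spheres $\partial B_p(r)$ need not be smooth, since the cut locus may meet them. I would handle this by working with the Lipschitz function $\rho$ directly, invoking the coarea formula for Lipschitz functions together with the fact that $r \mapsto A(r)$ is locally Lipschitz with $A'(r) = \mathcal{H}^1(\partial B_p(r))$ for almost every $r$, so that all the Cauchy--Schwarz estimates above hold in the almost-everywhere sense needed for the capacity bound.
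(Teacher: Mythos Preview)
Your argument is correct, but there is nothing in the paper to compare it to: this theorem is quoted from Colding--Minicozzi \cite{CM} (later invoked as ``Proposition~1.37 of \cite{CM}'' in the proof of Corollary~\ref{corint2}) and is not proved in the paper itself. What you have written is essentially the standard capacity proof, in the spirit of the volume growth criteria of Grigor'yan, Karp, and Varopoulos that you mention: bound the relative capacity by $\bigl(\int_1^R dr/L(r)\bigr)^{-1}$ via radial competitors, then use dyadic Cauchy--Schwarz together with $A(r)\le Cr^2$ to force $\int_1^\infty dr/L(r)=\infty$. The Colding--Minicozzi proof is organized slightly differently---one fixes an explicit logarithmic cut-off and estimates its Dirichlet energy directly---but the mechanism (coarea plus quadratic area growth making the energy decay like $1/\ln R$) is the same, and your optimization step simply packages that choice. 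Your remarks about the Lipschitz coarea formula and $A'(r)=L(r)$ a.e.\ are the right way to handle the cut locus; no further justification is needed at this level.
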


Now we fix $p\in \Sigma $ and let $B_{p}\left( R\right) =\left\{ x\in \Sigma :r\left( x\right) <R\right\}$ be the  intrinsic geodesic ball of radius $R$ in $\Sigma,$ where $
r\left( x\right) =d_{\Sigma }\left( p,x\right)$
 is the intrinsic distance on $\Sigma $. We denote by
\begin{eqnarray*}
L\left( r\right) &=&\int_{\partial B_{p}\left( r\right) }ds 
\end{eqnarray*}
and
\begin{eqnarray*}
A\left( r\right) &=&\int_{B_{p}\left( r\right) }dA,
\end{eqnarray*} 
the length of the geodesic circle $\partial B_{p}\left( r\right)$ and the
    area of $B_{p}\left( r\right)$, respectively.
By the first variation of the length of a curve, we get
\begin{eqnarray}\label{substituir10}\nonumber
\dfrac{d}{dr}L(r)&=&\dfrac{d}{dr}\left( \int_{\partial B_{p}\left( r\right) }ds\right)\\
&=&\int_{\partial B_{p}(r)} \langle \kappa _{g}\nu,\nu \rangle ds  = \int _{\partial B_{p}(r)} \kappa _{g}ds,
\end{eqnarray}
where $\kappa _{g}$ is the geodesic curvature of $\partial B_{p}(r).$

Also, we let $S$, $\mathrm{Ric}$, and $K$ denote the scalar, Ricci, and sectional curvatures of $M$, respectively, while $S_{\Sigma}$ and $K_{\Sigma}$ will be the scalar and Gauss curvatures of $\Sigma$, respectively. Tracing the Gauss equation, we have
\begin{equation}
S_{\Sigma}=S-2\mathrm{Ric}(\nu ,\nu)+4 H^2-|h|^2.
\end{equation} 
Since on a surface $S_{\Sigma}=2K_{\Sigma}$, the equation above becomes
\begin{equation}\label{eq00}
K_{\Sigma}=\frac{S}{2}-\mathrm{Ric}(\nu, \nu)+ 2H^2-\frac{1}{2}|h|^2.
\end{equation} 

Let $\{ e_1,e_2\}$ be a local orthonormal frame in $\Sigma$ and $R_{1212}$ be the sectional curvature of $M$ for the $2$-plane spanned by $\{e_1,e_2\}$, we have
$$R_{1212}+\mathrm{Ric}(\nu ,\nu)=\frac{1}{2}S.$$
Then, using \eqref{eq00}, we obtain
\begin{equation}
\label{eqseclem}
-2K_{\Sigma}=-2R_{1212}+|h|^2-4H^2.
\end{equation}

The following lemma will play a crucial role in deriving the area estimates for stable $H$-surfaces in three-dimensional spaces.

\begin{lemma}
\label{Main} Let $\Sigma $ be a stable $H$-surface in a three dimensional
manifold $M.$ Let $B_{p}\left( R\right) $ be a geodesic ball in $\Sigma $
that does not intersect the cut locus of $p$ in $\Sigma $ or the boundary of 
$\Sigma.$ Assume that $\phi=\phi \left( r\right) $ is a Lipschitz
continuous, nonincreasing function on $\left[ 0,R\right] $ with $\phi \left(
R\right) =0.$

\begin{itemize}
\item[\textbf{a)}] If the scalar curvature of $M$ satisfies $S\geq -6\alpha$ for some $%
\alpha\geq 0,$ then

\begin{equation}
-2\int_{0}^{R}\phi \left( r\right) \phi ^{\prime }\left( r\right) L^{\prime
}\left( r\right) dr\leq 2\pi \phi ^{2}\left( 0\right) +\int_{B_{p}\left(
R\right) }\left( \phi ^{\prime }\right) ^{2}+3(\alpha-H^2)\,\int_{B_{p}\left(
R\right) }\phi ^{2};  \label{Main1}
\end{equation}

\item[\textbf{b)}] If the sectional curvature of $M$ satisfies $K\geq -\alpha$ for some $%
\alpha\geq 0,$ then

\begin{equation}
-4\int_{0}^{R}\phi \left( r\right) \phi ^{\prime }\left( r\right) L^{\prime
}\left( r\right) dr\leq 4\pi \phi ^{2}\left( 0\right) +\int_{B_{p}\left(
R\right) }\left( \phi ^{\prime }\right) ^{2}+4(\alpha-H^2)\,\int_{B_{p}\left(
R\right) }\phi ^{2}.  \label{Main2}
\end{equation}
\end{itemize}
\end{lemma}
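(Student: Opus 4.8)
The strategy is to test the stability inequality \eqref{estabilidade} with the radial function $f(x)=\phi(r(x))$ and then trade the ambient curvature term $|h|^2+\mathrm{Ric}(\nu,\nu)$ for the intrinsic Gauss curvature of $\Sigma$ via the traced Gauss equations \eqref{eq00} and \eqref{eqseclem}; Gauss--Bonnet together with the first variation of length \eqref{substituir10} then finishes the job. Since $\phi$ is Lipschitz, nonincreasing and $\phi(R)=0$, the composition $f=\phi\circ r$ is Lipschitz with support in $\overline{B_p(R)}$, hence admissible in \eqref{estabilidade}; and because $B_p(R)$ meets neither the cut locus of $p$ nor $\partial\Sigma$, the distance $r$ is smooth on $B_p(R)\setminus\{p\}$ with $|\nabla r|=1$ there, so $|\nabla f|^2=\phi'(r)^2$ a.e. Thus \eqref{estabilidade} becomes
$$\int_{B_p(R)}\bigl(|h|^2+\mathrm{Ric}(\nu,\nu)\bigr)\phi^2 \ \le\ \int_{B_p(R)}(\phi')^2 .$$

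Next I would bound the integrand $|h|^2+\mathrm{Ric}(\nu,\nu)$ below by a multiple of $-K_\Sigma$ plus a lower-order term. For part (a), solving \eqref{eq00} for $\mathrm{Ric}(\nu,\nu)$ gives $|h|^2+\mathrm{Ric}(\nu,\nu)=\tfrac12|h|^2+\tfrac{S}{2}+2H^2-K_\Sigma$; combined with the elementary inequality $|h|^2\ge 2H^2$ on a surface and the hypothesis $S\ge -6\alpha$ this yields $|h|^2+\mathrm{Ric}(\nu,\nu)\ge -K_\Sigma-3(\alpha-H^2)$. For part (b), \eqref{eqseclem} gives $|h|^2=-2K_\Sigma+2R_{1212}+4H^2$, while $K\ge-\alpha$ forces $R_{1212}\ge-\alpha$ and $\mathrm{Ric}(\nu,\nu)\ge-2\alpha$ (a sum of two sectional curvatures), whence $|h|^2+\mathrm{Ric}(\nu,\nu)\ge -2K_\Sigma-4(\alpha-H^2)$. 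Inserting these into the displayed inequality (and using $\phi^2\ge 0$) produces
$$-\int_{B_p(R)}K_\Sigma\,\phi^2 \ \le\ \int_{B_p(R)}(\phi')^2+3(\alpha-H^2)\int_{B_p(R)}\phi^2$$
in case (a), and the same inequality with $-2\int K_\Sigma\phi^2$ and $4(\alpha-H^2)$ in case (b).

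It then remains to rewrite $\int_{B_p(R)}K_\Sigma\phi^2$ in terms of $\phi^2(0)$ and $\int \phi\phi' L'$. Since $B_p(R)$ contains no cut point, each $B_p(r)$ with $r\le R$ is a topological disk, so Gauss--Bonnet and \eqref{substituir10} give $G(r):=\int_{B_p(r)}K_\Sigma\,dA=2\pi-L'(r)$, a function smooth on $(0,R)$ with $G(r)\to 0$ as $r\to 0$. By the co-area formula (using $|\nabla r|=1$) and integration by parts,
$$\int_{B_p(R)}K_\Sigma\phi^2\,dA=\int_0^R\phi^2(r)\,G'(r)\,dr=\bigl[\phi^2 G\bigr]_0^R-\int_0^R 2\phi\phi'\,G\,dr ;$$
the boundary terms vanish because $\phi(R)=0$ and $\phi^2(r)G(r)\to 0$ as $r\to 0$, and since $G=2\pi-L'$ and $\int_0^R\phi\phi'\,dr=-\tfrac12\phi^2(0)$ this equals $2\pi\phi^2(0)+2\int_0^R\phi\phi'L'(r)\,dr$. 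Substituting this identity into the two inequalities above and rearranging yields \eqref{Main1} and \eqref{Main2} respectively.

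The only genuinely delicate points are bookkeeping ones: verifying that $f=\phi\circ r$ is a legitimate Lipschitz, compactly supported test function and that $|\nabla r|\equiv 1$ off $p$ — which is exactly why the hypotheses exclude the cut locus and $\partial\Sigma$ — and justifying the integration by parts against $G$ (smooth on $(0,R)$, with the endpoint limits handled as above). Everything else is the purely algebraic chain of substitutions sketched here.
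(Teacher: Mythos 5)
Your proof is correct and uses essentially the same ingredients as the paper's: the stability inequality tested with the radial cutoff $\phi(r)$, the traced Gauss equation together with $|h|^2\geq 2H^2$, the Gauss--Bonnet theorem combined with $L'(r)=\int_{\partial B_p(r)}\kappa_g$, and the coarea formula with the integration-by-parts identity. The only difference is organizational — you apply stability first and then convert $\int K_\Sigma\phi^2$ via Gauss--Bonnet with $\chi(B_p(r))=1$ (legitimate under the no-cut-locus hypothesis), whereas the paper first derives the differential inequality for $L'(r)$ using $\chi\leq 1$ and invokes stability at the end — and the two computations are equivalent.
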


\begin{proof}

For any $0<r<R$, by applying the Gauss-Bonnet theorem in \eqref{substituir10} and using that the Euler characteristic $\chi (B_{p}(r)) \leq 1$, we have

\begin{eqnarray}\label{eq02}
\dfrac{d}{dr}L(r)&=&\int _{\partial B_{p}(r)} \kappa _{g}ds = 2\pi \chi (B_{p}(r)) - \int _{B_{p}(r)} K_{\Sigma} dA \nonumber \\
&\leq & 2\pi - \int _{B_{p}(r)} K_{\Sigma} dA. \label{eqp01}
\end{eqnarray}
For item \textbf{a)}, assuming $S \geq -6\alpha$, we can deduce from equation \eqref{eq00} that
\begin{eqnarray*}
K_{\Sigma}&\geq & -3\alpha -\mathrm{Ric}(\nu , \nu)-\dfrac{1}{2}|h|^{2}+2H^2 \\
&=&-3\alpha -(\mathrm{Ric}(\nu , \nu)+|h|^2)+\dfrac{1}{2}|h|^2+2H^2\\
&=&3(\alpha -H^2)+(\mathrm{Ric}(\nu , \nu) + |h|^2),
\end{eqnarray*}
where we have used in the last line that $|h|^2 \geq 2H^2$. Integrating the above inequality over $B_{p}(r)$, we obtain
$$-\int_{B_{p}(r)} K_{\Sigma} \leq 3(\alpha -H^2 )A(r)+\int_{B_{p}(r)}(\mathrm{Ric}(\nu , \nu)ds+|h|^2) .$$
Then \eqref{eqp01} becomes
\begin{equation}
L'(r) \leq 2\pi +\int_{B_{p}(r)}(\mathrm{Ric}(\nu , \nu) + |h|^2) +3(\alpha -H^2)A(r). \label{eqp02}
\end{equation}
Multiplying \eqref{eqp02} by $-2\phi(r)\phi'(r)$ (which is nonnegative) and integrating from $r=0$ to $r=R$, we have
\begin{eqnarray}\label{peq03}
-2\int_{0}^{R} \phi(r)\phi'(r)L'(r)dr &\leq & 2\pi \phi ^2(0) -2\int_{0}^{R} \phi (r)\phi '(r)\left( \int_{B_{p}(r)}(\mathrm{Ric}(\nu , \nu)+|h|^2)\right)dr \nonumber \\
& &-6(\alpha -H^2)\int_{0}^{R}\phi (r)\phi '(r)A(r)dr.
\end{eqnarray}
Note that for any real function $f(r)$ with $f(0)=0$, we have

\begin{equation}\label{eqp04}
-2\int_{0}^{R}\phi (r) \phi '(r)f(r)dr=\int_{0}^{R} f'(r)\phi ^2(r)dr.
\end{equation}
If we take $f(r)=\int_{B_{p}(r)}dA=A(r)$, we can apply \eqref{eqp04} to obtain

\begin{eqnarray} \label{eqphida}
-2\int_{0}^{R}\phi '(r)\phi (r)A(r)dr&=&\int_{0}^{R} \phi ^2(r)A'(r)dr \nonumber \\
&=&\int_{0}^{R} \phi ^2(r)L(r)dr=\int_{B_{p}(R)} \phi^2(r),
\end{eqnarray}
where we have used the coarea formula in the last equation. Therefore, we obtain

\begin{equation}
-6(\alpha -H^2) \int_{0}^{R}\phi (r)\phi '(r)A(r)=
3(\alpha-H^2) \int_{B_{p}(R)}\phi ^2.
\end{equation}

Now, substituting $f(r)=\int_{B_{p}(r)}(\mathrm{Ric}(\nu , \nu)+|h|^2)$ into \eqref{eqp04}, we have

\begin{eqnarray}\label{eqppta}\nonumber
-2\int_{0}^{R}\phi (r)\phi '(r)\left( \int_{B_{p}(r)}(\mathrm{Ric}(\nu , \nu)+|h|^2)\right)&=&\int_{0}^{R}\phi ^2(r)\left(   \int_{\partial B_{p}(r)}(\mathrm{Ric}(\nu , \nu)+|h|^2)\right)\\ \nonumber
&=&\int_{B_{p}(R)}(\mathrm{Ric}(\nu ,\nu)+|h|^2)\phi ^2\\
&\leq & \int_{B_p(R)}(\phi ')^2, 
\end{eqnarray}
where we have used the stability inequality \eqref{estabilidade}. Then, \eqref{peq03} becomes

$$-2\int_{0}^{R}\phi (r)\phi '(r)L'(r)dr \leq 2\pi \phi ^2(0)+ \int_{B_{p}(R)}(\phi ')^2+3(\alpha-H^2)\int_{B_{p}(R)}\phi ^2.$$ 
This concludes the proof of item a).\\

For item \textbf{b)}, we observe that $K\geq -\alpha$ implies $-2R_{1212}\leq 2\alpha$. Thus, \eqref{eqseclem} becomes
$$-2K_{\Sigma} \leq 2(\alpha -2H^2) + |h|^2.$$
Using the last expression in \eqref{eq02}, we have
\begin{eqnarray*}
2L'(r) &\leq & 4\pi +2(\alpha-2H^2) A(r)+\int_{B_{p}(r)}|h|^2.
\end{eqnarray*}
Multiplying the above expression  by $-2\phi (r)\phi '(r)$ (which is nonnegative), and integrating from $r=0$ to $r=R$, we get

\begin{eqnarray}\label{eq06}
-4\int_{0}^{R}\phi (r)\phi '(r)L'(r)dr&\leq &4\pi \phi ^2(0) -4(\alpha-2H^2)\int_{0}^{R}\phi (r)\phi '(r)A(r)dr \nonumber\\
& &-2\int_{0}^{R}\phi (r)\phi '(r)\left( \int_{B_{p}(r)}|h|^2 \right)dr.
\end{eqnarray}
Note that $\mathrm{Ric}(\nu , \nu)=K(\nu , e_1)+K(\nu ,e_2) \geq -2\alpha $, since  $K\geq -\alpha$.
Then, integrating over $B_{p}(r)$:
$$\int _{B_{p}(r)} \mathrm{Ric}(\nu ,\nu)\geq -2\alpha A(r).$$
Since  $-2\phi(r) \phi '(r)\geq 0$, we have
\begin{equation}\label{eq07}
-2\int _{0}^{R} \phi (r)\phi '(r)\int _{B_{p}(r)}\mathrm{Ric}(\nu , \nu) \geq 4\alpha \int _{0}^{R}\phi (r)\phi '(r)A(r).
\end{equation}
Furthermore, we obtain

$$-2\int _{0}^{R}\phi (r)\phi '(r)\left( \int _{B_{p}(r)}(\mathrm{Ric}(\nu , \nu)+|h|^2) \right)dr \leq \int _{B_{p}(r)}(\phi ')^2.$$
Applying \eqref{eq07} and \eqref{eqphida}, we get

\begin{eqnarray} \label{sub1}
-2\int _{0}^{R}\phi (r)\phi '(r)\left(\int _{B_{p}(r)}|h|^2\right)dr&\leq & \int _{B_{p}(R)}(\phi ')^2+2\alpha \int _{B_{p}(R)}\phi ^2.
\end{eqnarray}
 Moreover, using \eqref{eqphida} again, we have

\begin{eqnarray} \label{sub2} 
    -4(\alpha -2H^2)\int _{B_{p}(r)}\phi (r)\phi '(r)A(r)&=&2(\alpha -2H^2)\int _{B_{p}(r)}\phi ^2.
\end{eqnarray}
Finally, replacing \eqref{sub1} and \eqref{sub2} into \eqref{eq06}, we obtain

$$-4\int_{0}^{R}\phi(r)\phi^{'}(r)L^{'}(r)dr\leq 4\pi \phi^{2}(0)+\int_{B_(p)(R)} (\phi^{'})^2 + (4\alpha -4H^2)
\int_{B_{p}(R)} \phi^2.$$
This is the stated expression \eqref{Main1}.

\end{proof}

Next, assuming that $\phi = \phi(r)$ is of class $C^2$, we get 

\begin{corollary}
\label{Corlemma} Let $\Sigma $ be a stable $H$-surface in a three-dimensional
manifold $M.$ Let $B_{p}\left( R\right) $ be a geodesic ball in $\Sigma $
that does not intersect the cut locus of $p$ in $\Sigma $ or the boundary of 
$\Sigma.$ Assume that $\phi=\phi \left( r\right) $ is a $C^{2}$
nonincreasing function on $\left[ 0,R\right] $ with $\phi \left( R\right)
=0. $

\begin{itemize}
\item[\textbf{a)}] If the scalar curvature of $M$ satisfies $S\geq -6\alpha$ for some $%
\alpha\geq 0,$ then 
\begin{equation}\label{substituir11}
\int_{B_{p}\left( R\right) }\left( \phi ^{\prime }\right)
^{2}+2\int_{B_{p}\left( R\right) }\phi \phi ^{\prime \prime }\leq 2\pi \phi
^{2}\left( 0\right) +3(\alpha-H^2)\,\int_{B_{p}\left( R\right) }\phi ^{2};
\end{equation}

\item[\textbf{b)}] If the sectional curvature of $M$ satisfies $K\geq -\alpha$ for some $%
\alpha\geq 0,$ then 
\begin{equation}\label{substituir12}
3\int_{B_{p}\left( R\right) }\left( \phi ^{\prime }\right)
^{2}+4\int_{B_{p}\left( R\right) }\phi \phi ^{\prime \prime }\leq 4\pi \phi
^{2}\left( 0\right) +4(\alpha-H^2)\,\int_{B_{p}\left( R\right) }\phi ^{2}.
\end{equation}
\end{itemize}
\end{corollary}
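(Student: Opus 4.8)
The plan is to obtain Corollary \ref{Corlemma} directly from Lemma \ref{Main}, the only new ingredient being an integration by parts on the left-hand sides of \eqref{Main1} and \eqref{Main2} that becomes available now that $\phi$ is assumed to be $C^{2}$. Indeed, with $\phi\in C^{2}$ the product $-2\phi\phi'$ is $C^{1}$, so moving the derivative off $L'$ gives
\[
-2\int_{0}^{R}\phi(r)\phi'(r)L'(r)\,dr
=\Big[-2\phi\phi'L\Big]_{0}^{R}
+2\int_{0}^{R}\big((\phi')^{2}+\phi\phi''\big)L(r)\,dr .
\]

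First I would verify that the boundary term vanishes: at $r=R$ this holds because $\phi(R)=0$, and at $r=0$ because $L(0)=0$, the geodesic circle of radius zero being a point. Here the hypothesis that $B_{p}(R)$ misses the cut locus of $p$ is exactly what guarantees that $L$ is smooth on $(0,R]$ and extends continuously to $r=0$, so that the integration by parts is legitimate. Next I would invoke the coarea formula, precisely as in the proof of Lemma \ref{Main}: since $A'(r)=L(r)$, for any radial function $g$ one has $\int_{0}^{R}g(r)L(r)\,dr=\int_{B_{p}(R)}g(r(x))\,dA$. Applying this with $g=(\phi')^{2}+\phi\phi''$ converts the identity above into
\[
-2\int_{0}^{R}\phi\phi'L'\,dr=2\int_{B_{p}(R)}(\phi')^{2}+2\int_{B_{p}(R)}\phi\phi'' .
\]

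To finish item \textbf{a)}, I would substitute this expression into inequality \eqref{Main1} and cancel one copy of $\int_{B_{p}(R)}(\phi')^{2}$ from both sides, which leaves exactly \eqref{substituir11}. For item \textbf{b)} the same identity, multiplied by $2$, reads $-4\int_{0}^{R}\phi\phi'L'\,dr=4\int_{B_{p}(R)}(\phi')^{2}+4\int_{B_{p}(R)}\phi\phi''$; plugging this into \eqref{Main2} and again cancelling one $\int_{B_{p}(R)}(\phi')^{2}$ yields \eqref{substituir12}.

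I do not expect a genuine obstacle here: the argument is essentially bookkeeping built on Lemma \ref{Main}. The only points that deserve a line of justification are the vanishing of the boundary term in the integration by parts and the legitimacy of the one-dimensional reduction via the coarea formula, and both are precisely what the $C^{2}$ hypothesis on $\phi$ and the absence of cut points inside $B_{p}(R)$ are there to provide.
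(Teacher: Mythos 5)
Your proposal is correct and follows essentially the same route as the paper: integrate by parts on $-2\int_0^R\phi\phi'L'\,dr$ (the boundary terms vanish since $\phi(R)=0$ and $L(0)=0$), convert $\int_0^R[(\phi')^2+\phi\phi'']L\,dr$ into integrals over $B_p(R)$ by the coarea formula, and substitute into \eqref{Main1} and \eqref{Main2}. Your explicit justification of the vanishing boundary term and the cancellation of one $\int_{B_p(R)}(\phi')^2$ is just a more detailed writing of the paper's argument.
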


\begin{proof}
Using integration by parts, we have

\begin{eqnarray*}
-2\int _{0}^{R}\phi (r)\phi '(r)L'(r)dr&=&2\int _{0}^{R}[(\phi ' (r))^2+\phi (r)\phi ''(r)]L(r)dr \\
&=&2\int _{B_{p}(R)}(\phi ' )^2+2\int _{B_{p}(R)}\phi \phi '',
\end{eqnarray*}
where we have used the coarea formula. To conclude, we can substitute this expression in \eqref{Main1} and \eqref{Main2}, respectively.
    
\end{proof}

\section{Proofs of main results}

\subsection{Proof of Theorem \ref{int1}}
We begin by proving item b) of the theorem. By item b) of
Lemma \ref{Main}, taking $\alpha = 1$, for any Lipschitz continuous, nonincreasing function $\phi = \phi(r)$ on $\left[ 0,t\right] $ with $\phi \left(
t\right) =0$, we have

\begin{equation}\label{eq08}
-4\int _{0}^{t}\phi (r)\phi '(r)L'(r)dr\leq 4\pi \phi ^2(0)+\int _{B_{p}(t)}(\phi ')^2  + (4-4H^2)\int _{B_{p}(t)}\phi ^2,
\end{equation}
for each $0 < t < R$. 
Consider $\displaystyle \phi (r)=e^{-\frac{b}{2}r}\psi (r)$, where $\psi$ is a Lipschitz continuous, nonincreasing function such that $\psi (t)=0$ and $b=\frac{4\sqrt{1-H^2}}{\sqrt{7}}$.
By direct computations, the left side of inequality \eqref{eq08} becomes

\begin{eqnarray*}
-4\int _{0}^{t} \phi (r) \phi '(r)L'(r)dr&=&\int _{0}^{t}(2b\psi ^2(r)-4\psi (r) \psi '(r))e^{-br}L'(r)dr  \\
&=&2b\int _{0}^{t}e^{-br}\psi ^2(r)L'(r)dr-4\int _{0}^{t}e^{-br}\psi (r)\psi '(r)L'(r)dr.
\end{eqnarray*}
After integration by parts and applying the coarea formula, we obtain that the first term on the right side is given by

$$2b\int _{0}^{t}e^{-br}\psi ^2(r)L'(r)dr=\int _{B_{p}(t)}(2b^2\psi ^2-4b\psi \psi ')e^{-br}.$$
Substituting these last expressions into \eqref{eq08}, we get

\begin{eqnarray*}
-4\int _{0}^{t}e^{-br}\psi (r)\psi '(r)L'(r)dr&\leq &4\pi \psi ^2(0)+\left( 4-4H^2-\frac{7b^2}{4}\right) \int _{B_{p}(t)}\psi ^2e^{-br} \\
& & + \, 3b \int _{B_{p}(t)}\psi \psi 'e^{-br}+\int _{B_{p}(t)}(\psi ')^2e^{-br}. 
\end{eqnarray*}
Recall that $b=\frac{4\sqrt{1-H^2}}{\sqrt{7}}$, then the above expression becomes

\begin{equation}\label{sub01}
-4\int _{0}^{t}e^{-br}\psi (r)\psi '(r)L'(r)dr\leq 4\pi \psi ^2(0)+\int _{B_{p}(t)}(\psi ')^2e^{-br}+3b\int _{B_{p}(t)}\psi \psi 'e^{-br}.
\end{equation}
From now on, the proof follows the same lines as in Theorem 10 of \cite{Mun}. For the sake of completeness, we provide the details below. For any $\eta$ with $0<\eta<R$ and $\eta\leq t$, consider

\begin{equation} \label{substituir04}
\psi(r)=
\left\{
  \begin{array}{rcl}
1,& \mathrm{if} & r\leq t-\eta\\
\frac{t-r}{\eta},& \mathrm{if} & t-\eta <r<t\\
0,&\mathrm{if} & r\geq t.\\
\end{array}
  \right. 
 \end{equation} 
 Then, by \eqref{sub01}, we obtain
\begin{eqnarray} \label{substituir01} \nonumber
 \frac{4}{\eta}\int_{t-\eta}^te^{-br}\psi(r)L'(r)dr &\leq & 4\pi+\frac{1}{\eta^2}\int_{B_p(t)\setminus B_p(t-\eta)}e^{-br}\\
 & & -\frac{3b}{\eta}\int_{B_p(t)\setminus B_p(t-\eta)}\psi(r)e^{-br}.
\end{eqnarray}
The integral on the left side is equivalent to

\begin{eqnarray} \label{substituir02} \nonumber
  \frac{4}{\eta}\int_{t-\eta}^te^{-br}\psi(r)L'(r)dr&=& -\frac{4}{\eta}L(t-\eta)e^{-b(t-\eta)}  
  +\frac{4b}{\eta}\int_{B_p(t)\setminus B_p(t-\eta)}\psi e^{-br}\\
  &&+\frac{4}{\eta^2}\int_{B_p(t)\setminus B_p(t-\eta)}e^{-br}.
\end{eqnarray}
Combining \eqref{substituir01} and \eqref{substituir02}, we have

\begin{equation}\label{0002}
\frac{3}{\eta^2}\int_{B_p(t)\setminus B_p(t-\eta)}e^{-br}+\frac{7b}{\eta}\int_{B_p(t)\setminus B_p(t-\eta)}\psi e^{-br}\leq 4\pi+\frac{4}{\eta}L(t-\eta)e^{-b(t-\eta)},
\end{equation}
for all $0<\eta\leq t<R.$ In particular, letting $\eta=t$, we get

$$\dfrac{3}{t^2}\int _{B_p(t)}e^{-br}\leq 4\pi -\dfrac{7b}{t}\int _{B_p(t)}e^{-br} \leq 4\pi.$$
Therefore, we have

\begin{equation} \label{substituir03}
 \int_{B_p(t)}e^{-br}\leq \frac{4\pi}{3}t^2,
\end{equation}
where the last expression holds for $0\leq r\leq t$. This implies that 

$$
\int_{B_p(t)}e^{-bt}\leq  \int_{B_p(t)}e^{-br}\leq \frac{4\pi}{3}t^2.
$$
Thus,
 \begin{equation} \label{substituir05}
     A(t)\leq \frac{4\pi}{3}t^{2}e^{bt}, \quad \forall t\leq R. 
 \end{equation}
Next, we claim that there exists an absolute constant $\Lambda>0$ such that for any $\tau$ and $s$ satisfying $0<2\tau\leq s<R-3\tau,$ then

\begin{equation}\label{0003}
\int_{B_p(s)\setminus B_p(s-\tau)} e^{-br}\leq \Lambda\tau +\frac{\Lambda}{\tau}\int_{B_p(s-\tau)\setminus B_p(s-2\tau)} e^{-br}.
\end{equation}
Indeed, letting $\eta=4\tau$ and $T=s-\frac{3\tau}{2}$, we have
$$\frac{\eta}{8}\leq T<R-\frac{9\eta}{8}.$$
Applying the mean value theorem to the function $f(t)=\int _{B_p(t)}e^{-br}$, there exists $\xi\in \left(T-\frac{\eta}{8},T+\frac{\eta}{8}\right)$ such that

\begin{equation}
\label{meanvaluethm2}
\int_{B_p(T+\frac{\eta}{8})}e^{-br}-\int_{B_p(T-\frac{\eta}{8})}e^{-br}=f'(\xi)\left(T+\frac{\eta}{8}-T+\frac{\eta}{8}\right),
\end{equation}
in other words, 

\begin{equation}\label{0004}
\int_{B_p(T+\frac{\eta}{8})\setminus B_p(T-\frac{\eta}{8})} e^{-br}=\frac{\eta}{4}e^{-b\xi}L(\xi).
\end{equation}
Letting  $t=\xi+\eta\in\left(T-\frac{\eta}{8}+\eta,T+\frac{\eta}{8}+\eta\right)=\left(T+\frac{7\eta}{8},T+\frac{9\eta}{8}\right)$, and using \eqref{substituir04}, we have

$$
\int_{B_p(t)\setminus B_p(t-\eta)} \psi e^{-br}\geq \frac{1}{2} \int_{B_p(t-\frac{\eta}{2})\setminus B_p(t-\eta)} e^{-br}\geq \frac{1}{2}\int_{B_p(T+\frac{3\eta}{8})\setminus B_p(T+\frac{\eta}{8})} e^{-br}.
$$
Multiplying the above inequality by $\frac{7b}{2\eta}$ and using \eqref{0002} and \eqref{0004}, we get

\begin{eqnarray}\label{eq0005}
\frac{7b}{2\eta}\int_{B_p(T+\frac{3\eta}{8})\setminus B_p(T+\frac{\eta}{8})} e^{-br}&\leq & \frac{7b}{\eta}\int_{B_p(t)\setminus B_p(t-\eta)} \psi e^{-br}\nonumber\\
&\leq &4\pi+\frac{4}{\eta}L(t-\eta)e^{-b(t-\eta)}-\frac{3}{\eta^2}\int_{B_p(t)\setminus B_p(t-\eta)} e^{-br}\nonumber\\
&\leq &4\pi+\frac{4}{\eta}L(t-\eta)e^{-b(t-\eta)}\nonumber\\
&=& 4\pi +\frac{16}{\eta^2}\int_{B_p(T+\frac{\eta}{8})\setminus B_p(T-\frac{\eta}{8})} e^{-br}.
\end{eqnarray}
Therefore, the above expression becomes

$$
\int_{B_p(T+\frac{3\eta}{8})\setminus B_p(T+\frac{\eta}{8})} e^{-br}\leq \frac{8\pi\eta}{7b}+\frac{32}{7b\eta}\int_{B_p(T+\frac{\eta}{8})\setminus B_p(T-\frac{\eta}{8})} e^{-br}.
$$
From this, we can infer that exists $\Lambda>0$ such that 
$$
\int_{B_p(T+\frac{3\eta}{8})\setminus B_p(T+\frac{\eta}{8})} e^{-br}\leq \Lambda\eta+\frac{\Lambda}{\eta}\int_{B_p(T+\frac{\eta}{8})\setminus B_p(T-\frac{\eta}{8})} e^{-br}.
$$
Finally, substituting $\eta = 4\tau$ and $s = T + \frac{3\tau}{2}$ above, we can conclude that \eqref{0003} holds, as we claimed.\\
Now, let $s \geq 6\Lambda$, taking $\tau = 2\Lambda$ in \eqref{0003} and iterating \eqref{0003} $m$ times with $m = \left[\frac{s}{2\Lambda}\right] - 2$, we obtain

$$
\int_{B_p(s)\setminus B_p(s-2\Lambda)} e^{-br}\leq 2\Lambda^2\sum_{k=0}^{m-1}\frac{1}{2^k}+\frac{1}{2^m}\int_{B_p(6\Lambda)} e^{-br}.
$$
Since the above series is convergent, we can deduce from \eqref{substituir05} that for $6\Lambda \leq s \leq R - 6\Lambda$, there exists a constant $C_2$ such that

\begin{equation}\label{eq0006}
\int_{B_p(s)\setminus B_p(s-2\Lambda)} e^{-br}\leq C_2.
\end{equation}
Using the mean value theorem as in \eqref{meanvaluethm2}, we can find $\xi \in (R - 16\Lambda, R - 14\Lambda)$ such that

\begin{equation}\label{eq0007}
\int_{B_p(R-14\Lambda)\setminus B_p(R-16\Lambda)} e^{-br}=2\Lambda L(\xi)e^{-b\xi}.
\end{equation}
Applying \eqref{eq0006} with $s=R-14\Lambda$, we can infer from \eqref{eq0007} that  
$$
L(R-\eta)e^{-b(R-\eta)}\leq C_3,
$$ for some positive constant $C_3$, and $\eta = R- \xi \in (14\Lambda, 16\Lambda)$. Plugging this into \eqref{0002}, we get
$$
\frac{3}{\eta^2}\int_{B_p(R)\setminus B_p(R-\eta)}e^{-br}\leq 4\pi +\dfrac{4}{\eta} L(R-\eta) e^{-b(R-\eta)}\leq C_4.
$$
Then, since  $r\leq R$, we have
$$
\frac{3}{\eta^2}\int_{B_p(R)\setminus B_p(R-\eta)}e^{-bR}\leq\frac{3}{\eta^2}\int_{B_p(R)\setminus B_p(R-\eta)}e^{-br}\leq C_4,
$$
which implies that 
$$
\int_{B_p(R)\setminus B_p(R-\eta)}dA\leq C_5e^{bR},
$$
for some $\eta\in(14\Lambda,16\Lambda)$. In particular,

\begin{equation}\label{0008}
\int_{B_p(R)\setminus B_p(R-14\Lambda)}dA\leq C_5e^{bR}.
\end{equation}
Replacing $R$ by $R-14k\Lambda$ in \eqref{0008}, where $k=1,2,\cdots, n$ and $n=\left[\frac{R}{14\Lambda}\right]-1,$ we obtain 

\begin{equation}
\label{eqquadraticestimate}
A(R)\leq \sum_{k=0}^{n}\int_{B_p(R-14k\Lambda)\setminus B_p(R-14(k+1)\Lambda)}dA+A(14\Lambda).
\end{equation}
By \eqref{0008}, we have

\begin{eqnarray*}
A(R)&\leq & C_6\sum_{k=0}^{n}e^{b(R-14k\Lambda)}+C_6\\
&\leq & Ce^{bR},
\end{eqnarray*}
where $C$ is an absolute constant that bounds the sum. This finishes the proof of item b).\newline

We will now proceed with the proof of item a). Recall that we are assuming $S \geq -6$. Then, by \eqref{Main1} of Lemma \ref{Main}, we get

\begin{equation}\label{eq10}
-2\int_{0}^{t}\phi(r)\phi^{'}(r)L^{'}(r)dr\leq 2\pi \phi^{2}(0)+\int_{B_{p}(t)} (\phi^{'})^2 + (3 -3H^2)\int_{B_{p}(t)} \phi^2.
\end{equation}
Setting $\phi (r) =e^{-ar}\psi (r)$, where $\psi$ is a nonincreasing Lipschitz function on $\left[ 0,t \right ]$ such that $\psi
(t)=0$, we have

\begin{eqnarray} \label{substitui06} 
\nonumber
2\int _{0}^{t}(a\psi^2(r) - \psi(r)\psi'(r))e^{-2ar}L'(r)dr& \leq & 2\pi \psi ^2(0) + \int _{B_{p}(t)}e^{-2ar}((\psi ')^2 - 2a\psi\psi') \\ 
& &+\left(a^2 + 3-3H^2 \right) \int _{B_{p}(t)} e^{-2ar}\psi ^2.
\end{eqnarray}
Integrating by parts, we obtain

\begin{eqnarray*}
2a\int _{0}^{t}e^{-2ar} \psi ^2(r)L'(r)dr
&=&-4a\int _{0}^{t}e^{-2ar}\psi (r)\psi '(r)L(r)dr+4a^2\int _{0}^{t}e^{-2ar}\psi ^2(r)L(r) dr.
\end{eqnarray*}
By coarea formula this is equivalent to

$$2a\int _{0}^{t} e^{-2ar}\psi ^2(r)L'(r)dr =-4a\int _{B_p(t)}\psi \psi 'e^{-2ar}+4a^2\int _{B_p(t)}\psi ^2e^{-2ar}.$$
Replacing this back into \eqref{substitui06} and setting $a=\sqrt{1-H^2}$, we obtain that

$$-2\int _{0}^{t} e^{-2ar}\psi (r)\psi '(r)L'(r)dr\leq  2\pi \psi ^2(0) +2a\int _{B_{p}(t)}\psi \psi 'e^{2ar} + \int _{B_{p}(t)}e^{-2ar}(\psi ')^2.$$
Using the function $\psi$ defined by \eqref{substituir04}, we have

\begin{eqnarray}\label{substituir07}
    \dfrac{2}{\eta }\int _{t-\eta}^{t}e^{-2ar}\psi (r)L'(r)dr&\leq &2\pi -\dfrac{2a}{\eta }\int _{B_p(t)\setminus B_p(t-\eta )}e^{-2ar}\psi \\ \nonumber
    &+&\dfrac{1}{\eta ^2}\int _{B_p(t)\setminus B_p(t-\eta )}e^{-2ar}.
\end{eqnarray}
Integrating by parts and using the coarea formula, we arrive at

\begin{equation*}
    \int _{t-\eta}^{\eta }\psi e^{-2ar}L'=-e^{-2a(t-\eta )}L(t-\eta )+\dfrac{1}{\eta }\int _{B_p(t)\setminus B_p(t-\eta )}e^{-2ar}+2a\int _{B_p(t)\setminus B_p(t-\eta )}\psi e^{-2ar}
\end{equation*}
Plugging these on \eqref{substituir07}, we get

\begin{equation*}
    \dfrac{1}{\eta ^2}\int _{B_p(t)\setminus B_p(t-\eta )}e^{-2ar}+\dfrac{6a}{\eta }\int _{B_p(t)\setminus B_p(t-\eta )}\psi e^{-2ar}\leq 2\pi +\dfrac{2}{\eta }e^{-2a(t-\eta )}L(t-\eta )
\end{equation*}
From this point on, we can continue with the same calculations as in item b) to derive that

$$A(R) \leq Ce^{2aR}.$$
\qed

Now, we will use similar arguments to the Lemma \ref{Main} and to the Theorem \ref{int1} to estimate the area of a geodesic ball in $\mathbb{H}^2 \times \mathbb{R}$.

\subsection{Proof of Theorem \ref{thmH2R}}

Let $\nu$ be the unit normal vector field along $\Sigma$, and $\partial _t$ the unit tangent vector field to $\Sigma$ at $\mathbb{R}$ direction in $\mathbb{H}^2\times \mathbb{R}$. By Gauss equation, we have
\begin{equation}\label{KH2R}
K_{\Sigma}=-v^2-\dfrac{1}{2}|h|^2+2H^2,
\end{equation}
where $v=\langle \nu , \partial _t  \rangle $ is the component of $\nu$ with respect to $\partial _t$. 
Applying \eqref{KH2R} into \eqref{eq02}, we get

\begin{eqnarray} \label{eq22}
L'(r)&\leq & 2\pi+\int _{B_p(r)} \left( v^2 -2H^2+\dfrac{1}{2} |h|^2 \right).
\end{eqnarray}
Consider a Lipschitz continuous, nonincreasing function $\phi = \phi(r)$ on $\left[ 0,t\right] $ with $\phi \left(
t\right) =0$. Multiplying \eqref{eq22} by $-4\phi (r)\phi '(r)$ (which is nonnegative) and integrating from $r=0$ to $r=R$, we obtain

\begin{eqnarray}\label{corrigir}\nonumber
    -4\int _{0}^{R} \phi (r)\phi '(r)L'(r)dr&\leq &4\pi (\phi (0))^2-4\int _{0}^{R}\phi (r)\phi '(r)\left( \int _{B_p(r)}v^2\right)dr \\
    &&+8H^2\int _{0}^{R}\phi (r)\phi '(r)A(r)dr
    -2\int _{0}^{R}\phi (r)\phi '(r)\left( \int _{B_p(r)}|h|^2  \right)dr.
\end{eqnarray}
Now we will analyze each term of the above expression. Applying \eqref{eqp04} to $f(r)=\int _{B_p(r)}v^2$ and using the coarea formula, we have

\begin{equation} \label{nova}
    -4\int _{0}^{R}\phi (r)\phi '(r)\int _{B_p(r)}v^2=2\int _{B_p(R)}v^2\phi ^2(r)
\end{equation}
Using \eqref{eqphida}, we get

\begin{eqnarray} \label{eq23}
\;\;\;\;\;\;8H^2\int _{0}^{R} \phi (r)\phi '(r)A(r)&=&-4H^2\left( -2\int _{0}^{R}\phi (r)\phi '(r)A(r)  \right) \\ \nonumber
&=&-4H^2\int _{B_p(R)}(\phi (r))^2.
\end{eqnarray}
Moreover, by \eqref{eqppta}, we have
\begin{eqnarray} \label{eq24}
    -2\int _{0}^{R}\phi (r)\phi '(r)\int _{B_p(r)}(\mathrm{Ric}(\nu , \nu)+|h|^2)&\leq & \int _{B_p(R)}(\phi '(r))^2.
\end{eqnarray}
Using \eqref{nova}, \eqref{eq23}, and \eqref{eq24} in \eqref{corrigir}, we obtain

\begin{eqnarray}\label{substituir09} \nonumber 
-4\int_{0}^{R}\phi (r)\phi '(r)L'(r)dr &\leq & 4\pi \phi^2(0)+\int _{B_p(R)} (v^2+1) \phi ^2\\
& & -4H^2\int _{B_p(R)} \phi ^2+\int_{B_p(R)}(\phi')^2,
\end{eqnarray}
where we have used that $\mathrm{Ric}(\nu ,\nu )=v^2+1$ in $\mathbb{H}^2 \times \mathbb{R}$. Since  $|v|^2 \leq 1$, we obtain
\begin{eqnarray} \label{nova2}
-4\int _{0}^{R}\phi (r)\phi '(r)L'(r) dr&\leq  &4\pi \phi ^2(0)+(2-4H^2)\int _{B_p(R)} \phi ^2\\ \nonumber
& &+\int _{B_p(R)}(\phi ')^2.
\end{eqnarray}
Defining $\phi(r) =e^{-\frac{a}{2}r}\psi (r)$, where $\psi $ is a Lipschitz  continuous nonincreasing function with $\psi (R)=0$, then \eqref{nova2} becomes

\begin{eqnarray} \label{adicionada}\nonumber 
    2\int _{0}^{R}(a\psi ^2(r)-2\psi(r)\psi'(r))e^{-ar}L'(r)dr &\leq &4\pi \psi ^2(0) + \left( 2-4H^2-\frac{7a^2}{4} \right) \int _{B_p(R)}e^{-ar}\psi ^2\\
    & &+\int _{B_p(R)}e^{-ar}(\psi ')^2-a\int _{B_p(R)}e^{-ar}\psi \psi '.
\end{eqnarray}
By integration by parts and applying the coarea formula, we obtain

$$2a\int _{0}^{R}e^{-ar}\psi ^2(r)L'(r)dr=2a^2\int _{B_p(R)}e^{-ar}\psi ^2-4a\int _{B_p(R)}e^{-ar}\psi \psi '.$$

$$-4\int _{0}^{R} e^{-ar} \psi (r)\psi '(r)L'(r)dr \leq  4\pi \psi ^2(0)+\int _{B_p(R)} (\psi ')^2e^{-ar} +3a\int _{B_p(R)} \psi \psi 'e^{-ar}.$$
From this point on, arguing as in the proof of Theorem \ref{int1}, we obtain the desired inequality.
\qed

\subsection{Proof of Theorem \ref{int2}}

Letting $\alpha = 1$ and $H=1$ in \eqref{substituir12}, we obtain 

\begin{equation} 
3\int _{B_{p}(R)}(\phi ')^2 +4\int _{B_{p}(R)}\phi \phi ''\leq 4\pi \phi (0)^2,
\end{equation}
where $\phi = \phi (r)$ is a $C^2$ nonincreasing function in $[0,R]$ with $\phi (R)=0$. In particular, we have

\begin{equation} \label{eq11}
\int _{B_{p}(R)}\phi \phi ''\leq \pi \phi^2 (0).
\end{equation}
Taking $\phi =\ln \; (R+1)-\ln \; (r+1)$, this expression becomes

\begin{equation*}
   \pi \; \ln^2(R+1) \geq \int _{B_{p}(R)}\dfrac{\ln\; (R+1)-\ln\;(r+1)}{(r+1)^2} \geq \int _{0}^{R}\dfrac{\mathrm{ln}\; (R+1)-\mathrm{ln}\;(r+1)}{(r+1)^2}L(r)dr,
\end{equation*}
where we used the coarea formula.
This is equivalent to

\begin{eqnarray}\label{eq12} \nonumber
    \pi \; \mathrm{ln}^2(R+1) &\geq & \int _{0}^{R}\dfrac{\mathrm{ln}\; (R+1)-\mathrm{ln}\;(r+1)}{(r+1)^2}(L(r)-2\pi r)dr \\
    & &+2\pi \int _{0}^{R}\dfrac{\mathrm{ln}\; (R+1)-\mathrm{ln}\;(r+1)}{(r+1)^2}rdr.
\end{eqnarray}
By the Gauss equation, we have

$$K_{\Sigma}=-1-\dfrac{1}{2}|h|^2+2H^2.$$
Since $H= 1$ and $|h|^2\geq 2H^2$, we obtain $K_{\Sigma}\leq 0$ in $B_p(R)$. By the Hessian comparison theorem, we have

 \begin{equation}\label{eq13}
     2\pi \leq \dfrac{L(r)}{r}\leq \dfrac{L(R)}{R}, \ \ \ \ \mathrm{for \ all} \  0 < r < R.
 \end{equation}
Now, the proof follows the same lines as Theorem 9 of \cite{Mun}. However, we will continue the proof for the sake of completeness.

By contradiction, suppose that

 \begin{equation}\label{eq25}
\dfrac{L(r)}{r}\geq 2\pi \left( 1+\dfrac{10}{\mathrm{ln}(R+1)}\right),     
 \end{equation}
for all $r \in [\sqrt{R}, R]$. Substituting this into \eqref{eq12}, we get 

\begin{eqnarray} \label{desigualdade} \nonumber
    \pi \; \mathrm{ln}^2(R+1) &\geq &\dfrac{20\pi}{\mathrm{ln} (R+1)}\int _{\sqrt{R}}^{R}\dfrac{\mathrm{ln}(R+1)-\mathrm{ln}(r+1)}{(r+1)^2}rdr\\
    &+&2\pi \int _{0}^{R}\dfrac{\mathrm{ln}(R+1)-\mathrm{ln}(r+1)}{(r+1)^2}rdr
\end{eqnarray}
Note that the second term on the right-hand side becomes

\begin{equation*}
    2\pi \int _{0}^{R}\dfrac{\mathrm{ln}(R+1)-\mathrm{ln}(r+1)}{(r+1)^2}rdr = -2\pi \ln (R+1)+\pi \ln ^2(R+1)+2\pi -\dfrac{2\pi} {R+1}.
\end{equation*}
Replacing this in \eqref{desigualdade}, we find

\begin{eqnarray}\label{eq26} \nonumber 
    2\pi \; \mathrm{ln}(R+1) &\geq &
    \dfrac{20\pi }{\mathrm{ln}(R+1)}\int _{\sqrt{R}}^{R}\dfrac{\mathrm{ln}(R+1)-\mathrm{ln}(r+1)}{(r+1)^2}rdr +2\pi -\dfrac{2\pi} {R+1} \\
    &\geq &
    \dfrac{20\pi }{\mathrm{ln}(R+1)}\int _{\sqrt{R}}^{R}\dfrac{\mathrm{ln}(R+1)-\mathrm{ln}(r+1)}{(r+1)^2}rdr,
\end{eqnarray}
On the other hand, the first term of \eqref{desigualdade} can be bounded as

\begin{eqnarray*}
  \int _{\sqrt{R}}^{R}\dfrac{\mathrm{ln}(R+1)-\mathrm{ln}(r+1)}{(r+1)^2}rdr &= &-\mathrm{ln}\left( \dfrac{R+1}{\sqrt{R}+1} \right)\left( \mathrm{ln}(\sqrt{R}+1)+\dfrac{1}{\sqrt{R}+1} \right)+\dfrac{1}{2}\mathrm{ln}^2(R+1)\\
  &-&\dfrac{1}{2}\mathrm{ln}^2(\sqrt{R}+1)+\dfrac{1}{\sqrt{R}+1}-\dfrac{1}{R+1}\\
  &\geq &\dfrac{1}{9}\mathrm{ln}^2(R+1),
\end{eqnarray*}
for all $R>R_0$ sufficiently large. Substituting this into \eqref{eq26}, we obtain

$$\mathrm{ln}(R+1) \geq \dfrac{10}{9}\mathrm{ln}(R+1).$$
Therefore, we obtain a contradiction, showing that there exists $r_0 \in [\sqrt{R}, R]$ such that

$$\dfrac{L(r_0)}{r_0} \leq 2\pi \left( 1+\dfrac{10}{\mathrm{ln}(R+1)} \right).$$
Thus, for $r < \sqrt{R}$, using \eqref{eq13}, we obtain

\begin{equation*}
L (r) \leq 2\pi r\left( 1+\frac{10}{\mathrm{ln} \;R}\right).
\end{equation*}
Hence, after integrating from $0$ to $r$, we conclude that
\begin{equation*}
A(r) \leq \pi\, r^2\left( 1+\frac{10}{\mathrm{ln} \; R}\right).
\end{equation*}
\qed

\subsection{Proof  of  Corollary \ref{corint2}.}

Letting $R\rightarrow \infty$ in \eqref{eq29}, we obtain 
 $$A(r)\leq \pi r^2.$$
 Thus, according to Proposition $1.37$ of \cite{CM}, we conclude that $\Sigma$ is parabolic.
Furthermore, it is well known (see Theorem 1 of \cite{FCS}) that there exists a positive function $g$ such that
 
 \begin{equation}\label{corolarioeq}
 \Delta g+\mathrm{Ric}(\nu , \nu )g+|h|^2g=0,
 \end{equation}
then
\begin{eqnarray*}
\Delta g&=&2g-|h|^2g\\
&\leq & 2(1-H^2)g,
\end{eqnarray*}
where we have used that $Ric(\nu,\nu) = -2$ and $|h|^2 \geq 2H^2$. Since we are assuming that $H=1$, we obtain that $\Delta g \leq 0.$  As  $\Sigma$ is parabolic, we conclude that $g$ is constant. From this and the equation \eqref{corolarioeq} we can deduce that $\Sigma$ is umbilic. Since it is noncompact, we conclude that $\Sigma$ is a horosphere.\\
\qed

\subsection{Proof of Theorem \ref{comparasionthmupfour}.}

We will need the following result

\begin{lemma}\label{teoremadegreen}	

Let $M$ be a complete $(n+1)$-dimensional manifold with sectional curvature bounded from below and $n\leq 4$. For a complete stable $H$-hypersurface $\Sigma$ in $M$ with $\lambda _0(\Sigma) > 0$, its minimal positive Green's function $G(x)= G(p,x)$ satisfies 

$$\int _{\Sigma \setminus B_p(1)} \dfrac{|\nabla G|^4}{G^3\ln ^{2q}(1+G^{-1})} < \infty ,$$
for any $q > \frac{1}{2}$.

\end{lemma}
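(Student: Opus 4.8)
The plan is to adapt the argument of Munteanu--Sung--Wang (Theorem 14 of \cite{Mun}) to the $H$-hypersurface setting, the only extra ingredient being the stability inequality \eqref{estabilidade} with its $|h|^2$ term and the curvature assumption $K \geq -1$. First I would set $f = \varphi(G)$ for a suitable cutoff-type function $\varphi$ built from $\ln(1+G^{-1})$, and plug it into the stability inequality. Since $G$ is harmonic on $\Sigma \setminus \{p\}$, one has $\Delta f = \varphi''(G)|\nabla G|^2$, so integration by parts converts $\int |\nabla f|^2$ into $-\int f \Delta f = -\int \varphi(G)\varphi''(G)|\nabla G|^2$, producing an inequality of the form
\begin{equation*}
\int_{\Sigma} (|h|^2 + \mathrm{Ric}(\nu,\nu))\,\varphi(G)^2 \leq -\int_{\Sigma} \varphi(G)\varphi''(G)|\nabla G|^2 .
\end{equation*}
The point of choosing $\varphi$ adapted to $G$ is that $-\varphi\varphi'' \leq C (\varphi')^2 / \ln^{2q}(1+G^{-1})$ up to lower order, so the right side is controlled by $\int |\nabla G|^4 / (G^3 \ln^{2q}(1+G^{-1}))$ type integrals, which is exactly the quantity we want to bound.

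Next I would exploit positivity on the left side. Here the difference from the minimal case appears: in \cite{Mun} one uses $\mathrm{Ric}(\nu,\nu) \geq -n$ (from $K \geq -1$) together with a lower bound on $|h|^2$ coming from the mean curvature, and the numerology $n \leq 4$ is what makes the resulting coefficient work out. Concretely, $|h|^2 \geq H^2 \cdot \text{(something)}$ and $\mathrm{Ric}(\nu,\nu) = \sum_i K(\nu, e_i) \geq -n$, so $|h|^2 + \mathrm{Ric}(\nu,\nu)$ is bounded below by a (possibly negative) multiple of $1$; the resulting error term $\int \varphi(G)^2$ over a region where $G$ is bounded above is finite because $\lambda_0(\Sigma) > 0$ forces $\Sigma$ to be nonparabolic with $G \to 0$ at infinity and $G$ integrable against the right weights. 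Actually the cleaner route, following \cite{Mun}, is to first establish a Caccioppoli-type estimate: use that $G$ is superharmonic and $\lambda_0(\Sigma) > 0$ to get $\int_{\Sigma \setminus B_p(1)} G < \infty$ or a weighted version, then feed this into the stability inequality to absorb the zeroth-order term on the left.

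Then I would run the standard cutoff/iteration: multiply by a genuine spatial cutoff $\chi$ supported in $B_p(2\rho) \setminus B_p(1)$, obtain
\begin{equation*}
\int_{\Sigma} \frac{|\nabla G|^4}{G^3 \ln^{2q}(1+G^{-1})}\, \chi^2 \leq C + C\int_{\Sigma} \frac{|\nabla G|^2}{G \ln^{2q}(1+G^{-1})} |\nabla \chi|^2 + (\text{l.o.t.}),
\end{equation*}
and control the $|\nabla \chi|^2$ term by Cauchy--Schwarz against the main term (absorbing half of it) plus an integral of $|\nabla G|^2/G \cdot |\nabla \chi|^4$-type quantities that is finite by the coarea formula and the fact that $\int_{\partial B_p(r)} |\nabla G| = $ constant (the flux of $G$). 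Letting $\rho \to \infty$ gives finiteness. The main obstacle I expect is twofold: first, verifying that the coefficient produced by combining $-\varphi\varphi''$, the lower bound on $|h|^2 + \mathrm{Ric}(\nu,\nu)$, and the choice of exponent $q > 1/2$ actually has the right sign for $n = 3, 4$ — this is a delicate inequality manipulation where the constraints $|H| < \sqrt{10}/3$ and $|H| < \sqrt{7}/2$ from Theorem \ref{comparasionthmupfour} presumably enter, though for the lemma itself perhaps no constraint on $H$ is needed since $|h|^2 \geq 0$ suffices; and second, justifying all the integrations by parts and boundary-term vanishing near the pole $p$ and at infinity, which requires the precise asymptotics of $G$ near $p$ (where $G \sim d(p,x)^{-(n-2)}$, or $-\ln d$ when $n=2$) and decay estimates at infinity coming from $\lambda_0 > 0$. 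I would handle the pole by noting $B_p(1)$ is excised in the statement, and handle infinity by the Caccioppoli bound established in the second step.
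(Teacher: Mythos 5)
Your overall scaffolding (stability inequality, log-weighted cutoffs built from $G$, flux/coarea considerations, $q>\frac12$ for convergence) resembles the paper's, but the core mechanism is missing and your first step cannot work as stated. Plugging $f=\varphi(G)$ into \eqref{estabilidade} only produces terms quadratic in $|\nabla G|$ (indeed $|\nabla f|^2=\varphi'(G)^2|\nabla G|^2$ and $\Delta f=\varphi''(G)|\nabla G|^2$), so no manipulation of that inequality alone can generate the quartic integrand $|\nabla G|^4/(G^3\ln^{2q}(1+G^{-1}))$; moreover your remark that the right-hand side ``is controlled by'' the quantity to be bounded is backwards as a strategy, since you need that quantity dominated by something finite, not dominating something else. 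The Caccioppoli-type inequality you then write down in your third step is precisely the nontrivial content of the lemma, and you give no mechanism for deriving it.

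What the paper actually does is set $v=\ln G$ and test the stability inequality with $|\nabla v|\phi$ (a gradient quantity, not a function of $G$ alone), combining it with Bochner's formula for the harmonic function $G$ and the refined Kato inequality, which give
\begin{equation*}
\tfrac12\Delta|\nabla v|^2\geq \tfrac{1}{n-1}|\nabla v|^4+\tfrac{n}{n-1}\,|\nabla|\nabla v||^2-\tfrac{n-2}{n-1}\langle\nabla|\nabla v|^2,\nabla v\rangle+\mathrm{Ric}_\Sigma(\nabla v,\nabla v).
\end{equation*}
The $|h|^2$ term inside $\mathrm{Ric}_\Sigma$ (via the Gauss equation and Proposition 3.2 of \cite{zhou}) is exactly cancelled by the stability inequality, and the resulting $H^2$ contribution enters with a favorable sign, so no bound on $H$ is needed --- you guessed this correctly, but for the wrong reason. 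Crucially, the restriction $n\leq 4$ does not come from the curvature terms combined with the mean curvature, as you suggest, but from the Kato constant: after absorbing the cross terms one needs $\frac{1}{n-1}-\frac{1+\delta}{4}-\delta>0$, i.e. $\frac{1}{n-1}>\frac14$. Only after the quartic estimate $\int_\Sigma|\nabla v|^4\phi^2\leq\Gamma\int_\Sigma(\eta^2+|\nabla\eta|^2)G|\nabla v|^2$ is in hand does one choose $\phi=G^{1/2}\eta$ and $\eta=\psi\,\ln^{-q}(AG^{-1})$ with $A$ large in terms of $\Gamma$, so that the $|\nabla\eta|^2$ contribution is absorbed back into the left side; the remaining term is finite by the coarea formula and the constancy of the flux $\int_{\{G=t\}}|\nabla G|$ over level sets of $G$ (not geodesic spheres), with $2q>1$ giving convergence of $\int_0^\alpha t^{-1}\ln^{-2q}(A/t)\,dt$. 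Without the Bochner--Kato step and the gradient test function, your proposal has a genuine gap.
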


\begin{proof}

Set $v= \mathrm{ln}\;G$. Using 
$$\Delta v=-|\nabla v|^2$$
and
$$|\nabla v|_{11}=\langle \nabla |\nabla v|,\nabla v \rangle$$
where $\{ e_1,e_2,...,e_n \}$ is a local orthonormal frame on $\Sigma $ with $e_1=\dfrac{\nabla v}{|\nabla v|}$.
Applying Bochner's formula, we have 

\begin{equation}\label{eq20}
\frac{1}{2}\Delta |\nabla v|^2 \geq \frac{1}{n-1}|\nabla v|^4+\frac{n}{n-1}|\nabla |\nabla v||^2-\frac{n-2}{n-1}\langle \nabla |\nabla v|^2, \nabla v\rangle + \mathrm{Ric}_{\Sigma} (\nabla v, \nabla v)
\end{equation}
on $\Sigma \setminus \{ p\}$. 

Let $(h_{ij})$ denote the second fundamental form of $\Sigma$. By the Gauss equation,

\begin{eqnarray}\label{eqric} \nonumber
    Ric_{\Sigma}(e_1,e_1)&=&\sum _{\alpha =2}^nK(e_1,e_1)+\sum _{\alpha =2}^n h_{11}h_{\alpha \alpha}-\sum _{\alpha =2}^n h_{1\alpha}^2 \\
    &=&\sum _{\alpha =2}^nK(e_1,e_1) +h_{11}nH-h_{11}^2-\sum _{\alpha =2}^n h_{1\alpha}^2\\ \nonumber
    &=&\sum _{\alpha =2}^nK(e_1,e_1)+h_{11}nH-\sum _{\alpha =1}^n h_{1\alpha}^2.
\end{eqnarray}
By applying Proposition 3.2 of \cite{zhou}, we get

\begin{equation}\label{desiric}
   h_{11}nH-\sum _{\alpha =1}^n h_{1\alpha}^2\geq -|h|^2+\dfrac{n^2(5-n)H^2}{4}. 
\end{equation}
Since  the sectional curvature is bounded from below, there exists a constant $C_0>0$ such that
\begin{equation}\label{desicurvatura}
\sum _{\alpha =2}^{n}K(e_1,e_1)\geq -C_0(n-1). 
\end{equation}
Applying \eqref{desiric} and \eqref{desicurvatura} to \eqref{eqric}, we obtain
\begin{eqnarray*}
\mathrm{Ric}_{\Sigma}(\nabla v, \nabla v)&\geq & C_0(n-1)|\nabla v|^2-|h|^2|\nabla v|^2+\dfrac{n^2(5-n)H^2|\nabla v|^2}{4}\\
& = & C_0[(n-1)+\mathrm{Ric}(\nu , \nu)]|\nabla v|^2-(\mathrm{Ric}(\nu , \nu)+|h|^2)|\nabla v|^2\\
& & +\dfrac{n^2(5-n)H^2|\nabla v|^2}{4}.
\end{eqnarray*}
Note that there exists a constant $C$ such that
$-C_0[(n-1)+\mathrm{Ric}(\nu , \nu)] \geq -C$ , because the sectional curvature is bounded from below. So, the above inequality  becomes 
\begin{eqnarray}
\label{eqlemmaric}
\mathrm{Ric}_{\Sigma}(\nabla v, \nabla v) &\geq & -C|\nabla v|^2-(\mathrm{Ric} (\nu , \nu)+|h|^2)|\nabla v|^2 \\ \nonumber
&+&\dfrac{n^2(5-n)H^2|\nabla v|^2}{4}.
\end{eqnarray}
By the stability inequality, we have
$$\int _{\Sigma}(\mathrm{Ric}(\nu , \nu)+|h|^2)|\nabla v|^2\phi^2 \leq \int _{\Sigma}|\nabla (|\nabla v|\phi )|^2,$$
where $\phi$ is a cut-off function. 
Thus, \eqref{eqlemmaric} implies that
{\begin{equation}\label{eq30}
\int _{\Sigma} \mathrm{Ric} _{\Sigma}(\nabla v,\nabla v)\phi ^2 \geq -C\int _{\Sigma}|\nabla v|^2\phi ^2-\int _{\Sigma} |\nabla (|\nabla v|\phi)|^2 +\dfrac{n^2(5-n)H^2}{4}\int _{\Sigma}|\nabla v|^2\phi ^2  
\end{equation}
By the above inequality and \eqref{eq20}, we have
\begin{eqnarray} \label{eq31} \nonumber
   \dfrac{1}{n-1}\int _{\Sigma}|\nabla v|^4\phi ^2 &\leq & \dfrac{1}{2}
   \int _{\Sigma} \phi ^2 \Delta |\nabla v|^2+C\int _{\Sigma} 
   \phi ^2|\nabla v|^2 \\ 
   & &+\int _{\Sigma} |\nabla (|\nabla v|\phi)|^2 -\dfrac{n^2(5-n)H^2}{4}
   \int _{\Sigma}|\nabla v|^2\phi ^2 \\ \nonumber
   & &-\dfrac{n}{n-1}\int _{\Sigma}|\nabla |\nabla v||^2\phi^2+\dfrac{n-2}{n-1}\int _{\Sigma}\phi ^2\langle \nabla |\nabla v|^2,\nabla v \rangle.
\end{eqnarray}
Using the identity  
\begin{eqnarray} \label{eq33}
    |\nabla (|\nabla v|\phi)|^2&=&\phi ^2|\nabla |\nabla v||^2 +2\phi \langle \nabla|\nabla v|^2,|\nabla v|\nabla \phi \rangle +|\nabla v|^2|\nabla \phi |^2,
\end{eqnarray}
}
we see that  \eqref{eq31} is equivalent to
\begin{eqnarray*} 
    \dfrac{1}{n-1}\int _{\Sigma} |\nabla v|^4\phi^2&\leq &-\int _{\Sigma }\phi \langle \nabla \phi ,\nabla |\nabla v|^2 \rangle -\dfrac{n}{n-1} \int _{\Sigma} \phi ^2|\nabla |\nabla v||^2 \\ 
    & &+\dfrac{n-2}{n-1} \int _{\Sigma}\phi ^2\langle \nabla |\nabla v|^2,\nabla v \rangle + 
    C\int _{\Sigma} \phi ^2|\nabla v|^2+\int _{\Sigma} \phi ^2|\nabla |\nabla v||^2 \\
    & &+ \int _{\Sigma}2\phi \langle \nabla |\nabla v|,|\nabla v|\nabla \phi \rangle + \int _{\Sigma} |\nabla v|^2|\nabla \phi |^2\\
    & &-\dfrac{n^2(5-n)H^2}{4}\int _{\Sigma} \phi ^2|\nabla v|^2.
\end{eqnarray*}
Since $\langle \nabla \phi ,\nabla |\nabla v|^2 \rangle =2|\nabla v|\langle \nabla \phi ,\nabla |\nabla v| \rangle,$
we obtain
\begin{eqnarray} \label{eq34} \nonumber
    \dfrac{1}{n-1}\int _{\Sigma} |\nabla v|^4\phi^2&\leq &-\dfrac{1}{n-1} \int _{\Sigma}\phi ^2|\nabla |\nabla v||^2 + \dfrac{n-2}{n-1} \int _{\Sigma} \phi ^2 \langle  \nabla |\nabla v|^2, \nabla v \rangle \\
    &&+C\int _{\Sigma}\phi ^2|\nabla v|^2+\int _{\Sigma}|\nabla v|^2|\nabla \phi |^2 -\dfrac{n^2(5-n)H^2}{4}\int _{\Sigma} \phi ^2|\nabla v|^2.
\end{eqnarray}

Now, let $\phi =G^{\frac{1}{2}}\eta $, where $\eta$ is a cut-off function on  $\Sigma$ with $\eta = 0$ on $B_p(1)$. For any $\delta > 0$, we have

\begin{equation}\label{eq35}
    \int _{\Sigma} |\nabla v|^2 |\nabla \phi |^2 \leq C(\delta)\int _{\Sigma}|\nabla v|^2|\nabla \eta |^2G+\left(  \dfrac{1+\delta}{4} \right)\int _{\Sigma}\phi ^2|\nabla v|^4.
\end{equation}
Putting \eqref{eq35} into \eqref{eq34}, we arrive at
\begin{eqnarray*}
    \left( \dfrac{1}{n-1} -\dfrac{1+\delta}{4} \right)\int _{\Sigma} \phi ^2|\nabla v|^4 &\leq &-\dfrac{1}{n-1}\int _{\Sigma}\phi ^2|\nabla |\nabla v||^2+\dfrac{n-2}{n-1}\int _{\Sigma}\phi ^2\langle \nabla |\nabla v|^2,\nabla v \rangle \\
    &&+C\int _{\Sigma}\phi ^2|\nabla v| ^2+C(\delta)\int _{\Sigma}|\nabla v|^2|\nabla \eta |^2G\\
    &&-\dfrac{n^2(5-n)H^2}{4}\int _{\Sigma}\phi ^2|\nabla v|^2.
\end{eqnarray*}
Recalling that $v=\ln G$ and $G >0$, we have  $\phi ^2|\nabla v|^2=\dfrac{\eta ^2|\nabla G|^2}{G}$. Thus, the above expression  becomes
\begin{eqnarray*}
  \left(  \dfrac{1}{n+1}-\dfrac{1+\delta}{4} \right)\int _{\Sigma}\phi ^2|\nabla v|^4&\leq &C(\delta)\int _{\Sigma} (\eta ^2+|\nabla \eta |^2)G|\nabla v|^2 -\dfrac{1}{n-1}\int _{\Sigma}G\eta ^2|\nabla |\nabla v||^2 \\
  &+&\dfrac{n-2}{n-1}\int _{\Sigma}\eta ^2\langle \nabla |\nabla v|^2,\nabla G \rangle -\dfrac{n^2(5-n)H^2}{4}\int _{\Sigma}\dfrac{\eta ^2|\nabla G|^2}{G}. 
\end{eqnarray*}
Since  $G$ is harmonic, we have $\mathrm{div}(\eta ^2 \nabla G)=\langle  \nabla G,\nabla \eta ^2 \rangle$.  Thus,
$$-\int _{\Sigma}|\nabla v|^2\langle  \nabla G,\nabla \eta ^2 \rangle =\int _{\Sigma}\langle  \eta ^2 \nabla G,\nabla |\nabla v|^2 \rangle.$$
Therefore,
\begin{eqnarray} \label{eq36} \nonumber
    \left(  \dfrac{1}{n-1}-\dfrac{1+\delta}{4} \right)\int _{\Sigma}\phi ^2|\nabla v|^4 &\leq &C(\delta)\int _{\Sigma}(\eta ^2+|\nabla \eta |^2)G|\nabla v|^2-\dfrac{1}{n-1}\int _{\Sigma}G\eta ^2|\nabla |\nabla v||^2\\
    & &-\dfrac{n-2}{n-1}\int _{\Sigma}|\nabla v|^2\langle \nabla G,\nabla \eta ^2 \rangle -\dfrac{n^2(5-n)H^2}{4}\int _{\Sigma}\dfrac{\eta ^2|\nabla G|^2}{G}.
\end{eqnarray}
Using Young's  inequality, we have

\begin{equation*}
    -\dfrac{n-2}{n-1}\int _{\Sigma}|\nabla v|^2\langle  \nabla G,\nabla \eta ^2 \rangle \leq \delta \int _{\Sigma} |\nabla v|^4\phi ^2+C(\delta)\int _{\Sigma}|\nabla \eta |^2G|\nabla G|^2.
\end{equation*}
Substituting this into \eqref{eq36}, we obtain
\begin{eqnarray*}
    \left( \dfrac{1}{n-1}-\dfrac{1+\delta}{4} \right)\int _{\Sigma}\phi ^2|\nabla v|^4 &\leq &C(\delta)\int _{\Sigma}(\eta ^2+|\nabla \eta |^2)G|\nabla v|^2-\dfrac{1}{n+1}\int _{\Sigma}G\eta ^2|\nabla |\nabla v||^2\\
    & &+\delta \int _{\Sigma}|\nabla v|^4\phi ^2+C(\delta)\int _{\Sigma}|\nabla \eta |^2G|\nabla v|^2\\
    & &-\dfrac{n^2(5-n)H^2}{4}\int_{\Sigma}\dfrac{\eta ^2|\nabla G|^2}{G}.
\end{eqnarray*}
Consequently,
$$\left( \dfrac{1}{n-1}-\dfrac{1+\delta}{4}-\delta \right)\int _{\Sigma}|\nabla v|^4\phi ^2 \leq C(\delta)\int _{\Sigma}(\eta^2+|\nabla \eta|^2)G|\nabla v|^2.$$
Since  $n\leq 4$, we can choose $\delta =\delta (n)>0$ such that $\dfrac{1}{n-1}-\dfrac{1+\delta}{4}-\delta >0$. Then, there exists an absolute constant $\Gamma >0$ such that

\begin{eqnarray} \label{eq37} \nonumber
    \int _{\Sigma}|\nabla v|^4\phi ^2&\leq &\Gamma \int _{\Sigma}(\eta ^2+|\nabla \eta |^2)G|\nabla v|^2\\
    &=&\Gamma \left(  \int _{\Sigma}\eta ^2G|\nabla v|^2+\int _{\Sigma}|\nabla \eta |^2G|\nabla v|^2\right) .
\end{eqnarray}
For $\frac{1}{2}<q<1,$ let $\eta=\psi\,w(G),$ where $\psi$ is a cut-off function
such that $\psi=0$ on $B_p(1)\cup \left(M\setminus B_p(2R)\right),$ $\psi=1$ on $B_p(R)\setminus B_p(2),$
and
 
\begin{equation*}
w\left( G\right) =\frac{1}{\ln ^{q}\left( A\,G^{-1}\right) }
\end{equation*}
with $A=e^{2\sqrt{\Gamma}}\alpha,$ $\alpha=\max_{\partial B_p(1)} G.$ Since  $|\nabla v|^2=\frac{|\nabla G|^2}{G^2}$, after some computations, we obtain 
\begin{eqnarray}\label{parte1} \nonumber
    \int _{\Sigma}\eta ^2G|\nabla v|^2&=&\int _{\Sigma}\psi ^2\omega ^2G\dfrac{|\nabla G|^2}{G^2}\\
    &\leq &\int _{L(0,\alpha)}\dfrac{|\nabla G|^2}{G\ln ^{2q}(AG^{-1})}\leq C.
\end{eqnarray}
Besides, we have 

\begin{eqnarray*}
    \int _{\Sigma}|\nabla \eta |^2G|\nabla v|^2&\leq &2\int _{\Sigma}\dfrac{|\nabla \psi |^2|\nabla G|^2}{G\ln^{2q}(AG^{-1})}+2\int _{\Sigma}\psi ^2|\nabla \omega |^2G|\nabla v|^2\\
    &\leq &C+2\int _{\Sigma}\psi ^2|\nabla \omega |^2G|\nabla v|^2.
\end{eqnarray*}
Furthermore, note that
\begin{equation*}
    2\int _{\Sigma}\psi ^2|\nabla \omega |^2G|\nabla v|^2 =2\int _{\Sigma}\dfrac{q^2}{[\ln (AG^{-1})]^2}\phi ^2|\nabla v|^4\leq \dfrac{1}{2\Gamma }\int _{\Sigma}\phi ^2|\nabla v|^4
\end{equation*}
So, we obtain
\begin{equation} \label{parte2}
   \int _{\Sigma}|\nabla \eta |^2G|\nabla v|^2 \leq C +\dfrac{1}{2\Gamma }\int _{\Sigma}\phi ^2|\nabla v|^4.
\end{equation}
Plugging \eqref{parte1} and \eqref{parte2} into \eqref{eq37}, we obtain
\begin{eqnarray*}
    \int _{\Sigma}|\nabla v |^4\phi^2 &\leq &C+\frac{1}{2}\int _{\Sigma}\psi ^2|\nabla v|^4\\
    &\leq & C.
\end{eqnarray*}
This is equivalent to
$$\int _{B_p(R) \setminus B_p(2)} \dfrac{|\nabla G|^4}{G^3 \ln ^{2q}(AG^{-1})} < C.$$
Letting $R \rightarrow \infty$,  we find

$$\int _{\Sigma \setminus B_p(2)} \dfrac{|\nabla G|^4}{G^3 \ln ^{2q}(AG^{-1})} < \infty.$$
Proving the desired result.
\end{proof}
\textbf{Proof of Theorem \ref{comparasionthmupfour}:}

Without loss of generality, let's assume $\lambda _0(\Sigma)>0$.
Let $\phi = |\nabla G|^{\frac{1}{2}}\varphi$.  By the Poincaré inequality, we have
$$\lambda _0(\Sigma)\int_{\Sigma}|\nabla G|\varphi ^2\leq \int _{\Sigma} |\nabla (|\nabla G|^{\frac{1}{2}}\varphi)|^2.$$
Using Cauchy's inequality on the right-hand side of the previous expression, we obtain
\begin{eqnarray}\label{eq16}
\lambda _0(\Sigma)\int _{\Sigma}|\nabla G|\varphi ^2&\leq & \left( \frac{1}{4} 
+\delta \right)\int _{\Sigma}\varphi ^2|\nabla |\nabla G||^2|\nabla G|^{-1} \\
&+&C(\delta)\int _{\Sigma}|\nabla G||\nabla \varphi |^2, \nonumber
\end{eqnarray}
 for any $\delta >0$. Since $G$ is a harmonic  away from $p$, by Kato's inequality (see \cite{Yau}), we have that 
 $$|\nabla ^2G|^2 \geq \dfrac{n}{n-1}|\nabla |\nabla G||^2.$$
Applying this inequality  into the Bochner's formula, we obtain
 $$\Delta |\nabla G|\geq \dfrac{1}{n-1}|\nabla |\nabla G||^2 |\nabla G|^{-1} + \mathrm{Ric}_{\Sigma}(\nabla G,\nabla G)|\nabla G|^{-1}.$$
 Arguing similarly as in \eqref{eqlemmaric}, we have
 $$\mathrm{Ric}_{\Sigma} (\nabla G, \nabla G) \geq -(n-1)|\nabla G|^2-|h|^2|\nabla G|^2+\dfrac{n^2(5-n)H^2|\nabla G|^2}{4}.$$ Thus, we get
 \begin{eqnarray} \label{eq17}
 \int _{\Sigma}\Delta |\nabla G| \varphi ^2 &\geq & \dfrac{1}{n-1}\int _{\Sigma} |\nabla |\nabla G||^2 |\nabla G|^{-1} \varphi ^2 - (n-1)\int _{\Sigma} |\nabla G| \varphi ^2 \\
 & &-\int _{\Sigma} |h|^2|\nabla G| \varphi ^2 +\dfrac{n^2(5-n)H^2}{4}\int _{\Sigma} |\nabla G| \varphi ^2. \nonumber
 \end{eqnarray}
 Since the sectional curvature is bounded, we have that $\mathrm{Ric} (\nu , \nu) \geq -n$. Then,  the stability inequality becomes
 $$\int _{\Sigma} |h|^2|\nabla G| \varphi ^2 \leq n\int _{\Sigma} |\nabla G| \varphi ^2 + \int _{\Sigma} |\nabla (|\nabla G|^{\frac{1}{2}} \varphi)|^2.$$ 
 Expanding the last term of the above expression, we have
 \begin{eqnarray*}
 \int _{\Sigma} |h|^2|\nabla G| \varphi ^2 &\leq &  \dfrac{1}{4}\int _{\Sigma} \varphi ^2|\nabla |\nabla G||^2 |\nabla G|^{-1} + \dfrac{1}{2}\int _{\Sigma}\langle \nabla |\nabla G|,\nabla \varphi ^2  \rangle\\
 & &+ \int _{\Sigma} |\nabla G| |\nabla \varphi |^2 + n\int _{\Sigma}|\nabla G|\varphi ^2. \end{eqnarray*}
  Plugging the above expression  into \eqref{eq17}, we get 
 
 \begin{eqnarray}  \label{eq18} 
  \dfrac{1}{n-1} \int _{\Sigma} |\nabla |\nabla G||^2 |\nabla G|^{-1} \varphi ^2 &\leq & \int _{\Sigma} \Delta |\nabla G| \varphi ^2 +\dfrac{1}{4}\int _{\Sigma} |\nabla |\nabla G||^2 |\nabla G|^{-1} \varphi ^2 \nonumber\\ 
 & &+ \dfrac{1}{2} \int _{\Sigma} \langle \nabla |\nabla G|, \nabla \varphi ^2  \rangle +\int_{\Sigma} |\nabla G| |\nabla \varphi |^2\\
 & &+\left(2n-1-\dfrac{n^2(5-n)H^2}{4} \right)\int _{\Sigma} |\nabla G|\varphi ^2. \nonumber
 \end{eqnarray}
 
 By the first Green's identity,
 \eqref{eq18} becomes
 
 \begin{eqnarray} \label{eq19}
 \left( \dfrac{1}{n-1} -\dfrac{1}{4}  \right) \int _{\Sigma} |\nabla |\nabla G||^2 |\nabla G|^{-1} \varphi ^2 &\leq & -\dfrac{1}{2}\int _{\Sigma} \langle \nabla \varphi ^2, \nabla |\nabla G|  \rangle+\int _{\Sigma} |\nabla G| |\nabla \varphi |^2 \nonumber \\ 
 &&  +\left( 2n-1-\dfrac{n^2(5-n)H^2}{4}  \right)\int _{\Sigma} |\nabla G| \varphi ^2. 
\end{eqnarray}
Furthermore, it is straightforward to check that
 $$|\langle \nabla \varphi ^2, \nabla |\nabla G|  \rangle|\leq 2\left(  \dfrac{\varphi |\nabla |\nabla G||}{|\nabla G|^{\frac{1}{2}}} . |\nabla \varphi | |\nabla G|^{\frac{1}{2}} \right)  \leq 2\left( \dfrac{2\epsilon \varphi ^2|\nabla |\nabla G||^2}{|\nabla G|} + \dfrac{|\nabla \varphi| ^2 |\nabla G|}{4\epsilon}  \right).$$
 Taking $2\epsilon = \delta$, we arrive at
 \begin{eqnarray*}
 \left( \dfrac{1}{n-1} -\dfrac{1}{4}+\delta \right)\int _{\Sigma} |\nabla |\nabla G||^2 |\nabla G|^{-1} \varphi ^2 &\leq& C(\delta) \int _{\Sigma} |\nabla \varphi |^2 |\nabla G|\\
 & &+ \left( 2n-1 -\dfrac{n^2 (5-n)H^2}{4}  \right)\int _{\Sigma} |\nabla G| \varphi ^2.
 \end{eqnarray*}
 Moreover, since  $ \dfrac{1}{n-1} -\dfrac{1}{4}+\delta \geq 0$, we have
\begin{eqnarray*}
\int _{\Sigma} |\nabla |\nabla G||^2 |\nabla G|^{-1} \varphi ^2 &\leq & C(\delta) \int _{\Sigma} |\nabla \varphi |^2 |\nabla G| +  2n-1 \\
& &-\dfrac{4(n-1)(8n-4-n^2 (5-n)H^2)}{4(5-n+4(n-1)\delta)}  \int _{\Sigma} |\nabla G| \varphi ^2.
\end{eqnarray*}
 Putting all of these facts together yields the following inequality
 
 $$\left[ \lambda _0(\Sigma)-\left( \dfrac{1}{4}+\delta  \right)\dfrac{4(n-1)(8n-4-n^2(5-n)H^2)}{4(5-n+4(n-1)\delta)}  \right] \int _{\Sigma} |\nabla G| \varphi ^2 \leq C(\delta) \int _{\Sigma} |\nabla \varphi|^2|\nabla G|.$$
 Finally, by  Lemma \ref{teoremadegreen}, we can conclude that
 \begin{equation}
 \label{eqeigenvalue}
 \lambda _0 (\Sigma) \leq \dfrac{(n-1)(8n-4-n^2(5-n)H^2)}{20-4n}.
\end{equation} 
In order to obtain the stated result, it is enough to substitute $n=3$ and $n=4$ in \eqref{eqeigenvalue}, respectively.
\\
\qed

\bibliographystyle{plain}
\bibliography{references.bib}

\end{document}